\newtheorem{theorem}{Theorem}
\newtheorem{proposition}{Proposition}
\newtheorem{lemma}{Lemma}
\newtheorem{corollary}{Corollary}
\newtheorem{assumption}{Assumption}
\newtheorem{definition}{Definition}
\newtheorem{example}{Example}
\newtheorem{scenario}{Scenario}
\renewcommand{\d}{\mathrm{d}}
\newcommand{\grad}{\mathop{\mathrm{grad}}}
\renewcommand{\div}{\mathop{\mathrm{div}}}
\newcommand{\per}{\mathop{\mathrm{Per}}}
\newcommand{\proj}{\mathop{\mathrm{proj}}\nolimits}
\newcommand{\prox}{\mathop{\mathrm{prox}}\nolimits}
\newcommand{\median}{\mathop{\mathrm{median}}}
\newcommand{\sign}{\mathop{\mathrm{sign}}}
\newcommand{\soft}{\mathop{\mathrm{soft}}\nolimits}
\newcommand{\tv}{\mathrm{TV}}
\newcommand{\one}{1}
\newcommand{\zero}{0}
\newcommand{\modif}[1]{{\color{black} #1}}
\title{
  Robust Consensus in Distributed Networks using Total Variation
}
\author{Walid Ben-Ameur, Pascal Bianchi and J\'er\'emie Jakubowicz%
}
\newcommand{\bR}{{\mathbb{R}}}
\newcommand{\bN}{{\mathbb{N}}}
\newcommand{\cU}{{\mathcal{U}}}
\newcommand{\cL}{{\mathcal{L}}}
\newcommand{\cC}{{\mathcal{C}}}
\newcommand{\bs}{\boldsymbol}
\begin{document}
\maketitle

\begin{abstract}
  Consider a connected network of agents endowed with local cost functions representing private objectives. Agents seek to find an agreement on some minimizer of the aggregate cost, by means of repeated communications between neighbors. Consensus on the average over the network, usually addressed by gossip algorithms, is a special instance of this problem, corresponding to quadratic private objectives. Consensus on the median, or more generally quantiles, is also a special instance, as many more consensus problems. In this paper we show that optimizing the aggregate cost function regularized by a total variation term has appealing properties. First, it can be done very naturally in a distributed way, yielding algorithms that are efficient on numerical simulations. Secondly, the optimum for the regularized cost is shown to be also the optimum for the initial aggregate cost function under assumptions that are simple to state and easily verifiable. Finally, these algorithms are robust to unreliable agents that keep injecting some false value in the network. This is remarkable enough, and is not the case, for instance, of gossip algorithms, that are entirely ruled by unreliable agents as detailed in the paper.
\end{abstract}

\section{Introduction}

Total variation has been widely used in the framework of inverse problems, where the aim is to recover a mathematical object that shows good regularity properties. An important landmark is~\cite{rudin1992nonlinear} that successfully applied total variation regularization to image denoising. In the previous decade the role of the $L_1$ norm was clearly connected to sparsity~\cite{candes2005decoding,candes2006robust,donoho2006compressed}. In this light, total variation promotes sparsity of the gradient; yielding locally flat objects. Curiously enough, total variation has been mainly developed in the continuous setting, where ``bounded variation'' functions and their properties are well studied~\cite{ambrosio2000functions}; while in the discrete setting, its properties on graphs, have only been recently emphasized~\cite{elmoataz2008nonlocal,couprie2012dual}. In this work, we show that total variation regularization can also be useful in the context of consensus problems.


Consensus algorithms designate a class of distributed methods allowing a set of connected agents\,/\,nodes to find an agreement on a some global parameter
value \cite{bertsekas:1997}.
The latter parameter is often defined as a minimizer of a global objective function defined as the sum of some local regret functions held by the
 agents 
\cite{lop-sayed-asap06,ram:nedic:veeravalli:jota-2010,bianchi:jakubo:2011}. As we shall see below, an important special case is obtained when the aim is to compute the average over the network of some local values held by the agents. The latter scenario will be refered to as the {\sl average consensus} case. It has been well-studied in the literature \cite{bertsekas:1997,kempe:dobra:gehrke:focs-2003,boyd:2006}. The most widespread approach to achieve average consensus is through iteration of linear operations mimicking the behaviour of heat equation \cite{boyd:2006}: at each round, nodes average the values in their neighborhood (including themselves). Similarly, in the more general framework of distributed  optimization, many algorithms have been proposed: some of them are based on distributed (sub)gradient approaches \cite{ram:nedic:veeravalli:jota-2010,bianchi:jakubo:2011,jakovetic2011fast,duchi2012dual} while others use splitting methods such as the Alternating Direction Method of Multipliers (ADMM) (see \cite{schizas:2008,Boyd2011,iutzeler2013asynchronous} and references therein). Under certain hypotheses, such approaches can be shown to converge to a state where each node in the network eventually has the same value -- the sought parameter.

However,  most  of  these works share a common view of the network: all agents show good will. They do not, for instance, deliberately introduce some false value inside the network, or refuse to update their value. There are a few recent work raising the problem of misbehaving agents in the gossip process \cite{eksin:2011,acemoglu:2011}, see also~\cite{mawlood2006issues} for a general perspective. In such scenarios, standard consensus algorithms not only fail, but can be driven arbitrarily far away from the sought consensus~\cite{walid2012robust}. A first approach to increase consensus robustness in unreliable networks is to detect misbehaving agents, identify them and finally exclude them from the network. Of course, cleaning the network beforehand is certainly beneficial whenever feasible, however
misbehaving agents are not necessarily detectable and even if they are, may be detectable only by using involved and computationally expensive algorithms. We refer to the recent works \cite{pasqualetti2012consensus,guo2012distributed}. An alternative is to design simple algorithms that naturally show good robustness properties.
For instance, the Authors of~\cite{lin2008distributed} study a continuous-time dynamical system allowing agents to track an agreement in the presence of external perturbation. The Authors of~\cite{zhang2012robustness} introduce a scheme in which each agent combines her/his current estimate with all but the extremal values received from her/his neighborhood. In the above works, it is worth mentioning that the objective is to ensure an (approximate) agreement between normal agents, irrespective to the value of this agreement. In this paper, our main interest is to build a robust consensus algorithm allowing the agents to find an agreement on a {\sl sought} parameter value.

{\bf Contribution.} Our contributions are the following.
\modif{
First, our definition of total variation on a graph is distinct from previous works~\cite{elmoataz2008nonlocal,couprie2012dual}. The distinction is that we use what is known as the \emph{anisotropic} total variation in the context of images and meshes. This subtle distinction has important consequences as it allows simple distributed algorithms.

Second, we cast the problem of robust distributed optimization over a network as an inverse problem using total variation regularization. To the best of our knowledge, this is a new
usecase for total variation regularization. Loosely speaking, this viewpoint amounts to think of consensus in a network as an extreme denoising process, where all the agents share the same value. In the context of image processing, it would amount to denoise until the image becomes totally grey.

Third, using our TV framework, we provide verifiable sufficient regularity conditions under which the minimizers of the relaxed problem coincide with the sought minimizers of the initial optimization problem. As a sanity check for the robustness of our algorithms, we analyze the convergence of our algorithms in the presence of stubborn agents that permanently introduce some false value in the network. We prove that unlike traditional approaches, our algorithms ensure that the estimates cannot be driven arbitrarily far away from the sought consensus.

Last, we provide  two iterative distributed algorithms which are shown to converge to the minimizers of the relaxed problem. Experimentally, we observe good convergence properties for the second (ADMM-based) algorithm.
}

The paper is organized as follows.  Section~\ref{sec:fmw} introduces the problem and the notations.
Section~\ref{sec:tvbasics} provides preliminary material on discrete Total Variation.
Section~\ref{sec:minimizers}  is devoted to the study of  the  minimizers of a relaxed distributed optimization problem .
Algorithms are proposed in Section~\ref{sec:algo}.
In Section~\ref{sec:stubborn}, we analyze the convergence of our algorithms in a scenario where stubborn agents are present.
Section~\ref{sec:simu} presents the numerical results.
Some of the proofs are given in appendixes.

\section{Distributed Optimization}
\label{sec:fmw}

\subsection{The Problem}

Consider a network of agents represented by an undirected  graph $G=(V,E)$ where $V$ is a finite set of agents and
$\{v,w\}$ belongs to $E$  if and only if agent $v$ and agent $w$ are able to communicate. We also use the notation
 $v\sim w$  when $\{v,w\}$ belongs to $E$.   We denote by $d(v)$ the degree of a vertex $v$ {\sl i.e.}, the number of neighbors in $G$.

We investigate the following optimization problem:
\begin{equation}
\label{eq:pb}
\inf_{x\in\bR} \sum_{v\in V} f_v(x)
\end{equation}
where $f_v:\bR\to \bR$ is a function which can be interpreted as the regret of agent $v$ when the network lies in a state $x$.
Merely for notational convenience, this paper is restricted to the case where parameter $x$ is real. Generalization to the case where $x$ belongs to an arbitrary Euclidean space is however straightforward.
We assume the following.
\begin{assumption} $ $\\
\modif{(a)}  For any $v\in V$, $f_v$ is a convex function.\\
\modif{(b)} The infimum of~(\ref{eq:pb}) is attained at some point~$x\in{\mathbb R}$.
\label{as:f}
\end{assumption}
\begin{example}
  We shall pay a special attention to the following particular case, which we shall refer to as the {\sl Average Consensus (AC)} case:
  \begin{equation}
    \label{eq:ACcase}
    \text{(AC) }\ \ f_v(x) = \frac 12 (x-x_0(v))^2
  \end{equation}
  where $x_0(v)$ represents some initial value held by agent $v$. In that case, problem~(\ref{eq:pb}) is equivalent to the distributed computation of the average $\overline x_0 = (1/{|V|})\sum_v x_0(v)$ where $|V|$ is the cardinal of~$V$.
\end{example}
\begin{example}
 A second special case of interest will be refered to as the {\sl Median Consensus (MC)} case:
 \begin{equation}
 \label{eq:MCcase}
 f_v(x) = |x-x_0(v)|\ .
 \end{equation}
 In this scenario, solving problem~(\ref{eq:pb}) is equivalent to searching for
 the median of sequence $(x_0(v))_{v\in V}$.
\end{example}

Each agent $v$ is supposed to hold some value $x_n(v)$ at each time $n\in \bN$. The aim of this paper is to introduce and analyze distributed algorithms which, under some assumptions, drive all sequences $(x_n(v))_{v\in V}$ to a common minimizer of (\ref{eq:pb}) as $n$ tends to infinity. Moreover, the proposed algorithms should be robust to the presence of misbehaving agents. By robust, we mean that the final estimate of regular (well-behaved) agents should remain in an acceptable vicinity of the sought consensus even in the case when other agents permanently introduce some false value in the network.

\subsection{Network Model}

Throughout this paper, we assume a synchronous network where a global clock allows the agents to communicate with each other at each clock tick.
In this paper, we refer to a {\sl distributed algorithm} as an iteration of the form:
\begin{equation}
\label{eq:regular}
x_{n+1}(v) = h_{n,v}\left((x_{k}(v),x_k(w) : w\sim v, 0\leq k\leq n)\right)
\end{equation}
for some specified functions $h_{n,v}$.
Nevertheless, we shall sometimes assume that some subset $S\subset V$ of agents do not follow the specified update rule~(\ref{eq:regular}).
Such agents will be called {\sl irregular}.
An irregular agent $v\in S$  is called  {\sl stubborn} if for any $n\geq 0$,
\begin{equation}
  \label{eq:stubborn}
  x_{n}(v) = x_0(v) \,.
\end{equation}
We denote by $S$ the set of irregular agents and by $R=V\backslash S$ the set of {\sl regular} agents.

\subsection{Variational Framework}
\label{sec:variational}


Consider replacing problem~(\ref{eq:pb}) with:
\begin{equation}
  \label{eq:reg}
  \min_{x\in\bR^V} \sum_{v\in V} f_v(x(v)) +  U(x)
\end{equation}
where $U:\bR^V\to \bR$ is a convex regularization penalizing the functions $x\in\bR^V$ that are away from the consensus space $\cC$.
There are several ways to choose $U$. The most immediate one is
$U = \iota_\cC$ as the indicator function of $\cC$ defined by  $\iota_\cC(x)=0$ if $x \in C$ and  $\iota_\cC(x)= + \infty$ otherwise.
In that case, problem~(\ref{eq:reg})
is equivalent to problem~(\ref{eq:pb}). From an intuitive point of view, setting $U = \iota_\cC$
means that consensus must be achieved {\sl at any price}. However, in the presence of irregular agents, it
is sometimes beneficial to break the diktat of consensus, in order to allow regular agents to possibly disagree with irregular ones.
Of course, for  $U \neq  \iota_\cC$, it can no longer be expected that the minimizers of (\ref{eq:reg}) coincide
in all generality with those of~(\ref{eq:pb}).
This can be seen as the  price to pay for an increased robustness. Nevertheless, we propose a way to select $U$ such that
the minimizers of (\ref{eq:reg}) coincide
with those of~(\ref{eq:pb}) {\sl at least for a certain class of functions}~$(f_v)_{v\in V}$.  We will focus  on  functions $U(x)=  \lambda\,\|x\|_{\tv}\ $ where $\lambda>0$ is a parameter to be specified and $\|x\|_{\tv}\  = \sum\limits_{ \{v,w\} \in E} |x(v) -x(w)|$.

In the sequel, we consider the following optimization problem:
\begin{equation}
  \label{eq:regTV}
  \min_{x\in\bR^V} \sum_{v\in V} f_v(x(v)) +  \lambda\,\|x\|_{\tv}\ .
\end{equation}

Intuitively, when $\lambda$ is large enough, the regularity term is dominant and the minimizer of~(\ref{eq:regTV}) is forced to the consensus subspace.
As shown in next sections,  this type of regularization functions allows some robustness against irregular agents   and leads to consensus when all agents are regular under some simple conditions.

In the AC problem, functions $f_v$ are given by~(\ref{eq:ACcase})
and the problem~(\ref{eq:regTV}) reduces to:
\begin{equation}
  \label{eq:rof}
  \min_{x\in\bR^V} \frac 12\|x-x_0\|_2^2+  \lambda\,\|x\|_{\tv}\ .
\end{equation}
In the context of image processing, the particular objective function~(\ref{eq:rof}) is referred to as the ROF (Rudin-Osher-Fatemi) energy~\cite{rudin1992nonlinear}.
We will refer to the general objective function in~(\ref{eq:regTV}) as a regularized energy, and to the minimizers of~(\ref{eq:regTV}) as regularized minimizers.

Our aim is threefolds: {\sl i)} to prove that the minimizers of~(\ref{eq:pb}) coincide with the regularized minimizers at least for a specified class of functions $f_v$;
{\sl ii)} to propose distributed algorithms to find regularized minimizers, {\sl iii)} to quantify the robustness of the algorithms in the presence of irregular (stubborn) agents.

 Let us first start with some general properties  related to total variation functions in a graph context.

\section{Total Variation on Graphs}

\label{sec:tvbasics}

The statements of this section can be seen as analogues of standard real analysis results. They will be of extensive use in Section~\ref{sec:minimizers}. Moreover, we believe these results can be of interest from a general perspective.
All proofs of this section are provided in Appendix~\ref{app:tvbasics}.

\subsection{Notations}

Consider an undirected graph $G=(V,E)$ where $V$ is a set of vertices and $E\subset V^2$ is a set of  edges.
Sometimes we are going to need an (arbitrary) orientation to each edge. $\vec{E}$ denotes whatever compatible set of directed edges, in the sense that $(v,w)\in\vec{E}$ implies that $\{v,w\}\in E$ and $(w,v)\not\in\vec{E}$; reciprocally $\{v,w\}\in E$ implies either $(v,w)\in\vec{E}$ or $(w,v)\in\vec{E}$. Of course, our results will not depend on the particular orientation we choose.





For a given set $A$, vector space $\mathbb{R}^A$ denotes the set of functions $A\to\mathbb{R}$, it is endowed with its standard vector space structure and scalar product $\langle f,g \rangle_A = \sum_{v\in A}f(v)g(v)$. Subscript $A$ will be omitted when no confusion can occur. $\zero_A$ stands for the constant function $v\in A\mapsto 0$ and $\one_A$ stands for the constant function $v\in A\mapsto 1$. The set $\cC$ of functions which are proportional to $\one_V$ is called the {\sl consensus subspace}. The cardinal of a set $A$ is denoted $|A|$.
The average of $x\in\mathbb{R}^A$ is denoted $\bar x = \frac{1}{|A|}\sum_{v\in A}x(v)$.
Notation $\grad$ accounts for the linear operator $\grad:\mathbb{R}^V \to  \mathbb{R}^{\vec E}$ defined for any $x\in {\mathbb R}^V$ by
$$
\grad(x): (v,w)\mapsto x(w) - x(v)\,.
$$
For instance, $\grad\one_V = 0_{E}$.
Notation $\div$ accounts for the operator $\div: \mathbb{R}^{\vec E} \to  \mathbb{R}^{V}$ defined for any $\xi\in {\mathbb R}^{\vec E}$ by
$$
\div(\xi): v\mapsto\sum_{(v,w)\in{\vec E}}\xi(v,w)-\sum_{(w,v)\in{\vec E}}\xi(w,v)\,.
$$
The following identity (integration by parts) holds:
 \begin{equation}
 \langle \grad f,\xi \rangle_{\vec E} = - \langle f,\div\xi\rangle_V
 \end{equation}
 In standard graph terminology, $\div$ is referred to as the {\sl incidence matrix}.
Operator $L = (-\div)\cdot\grad$ is the graph Laplacian associated to $G$.
Operator $L$ is positive semi-definite.

\subsection{Dual space of $(\mathbb{R}_0^V,\|\cdot\|_{\tv})$}

Let us denote by $\mathbb{R}_0^V$ the set $\{x\in\mathbb{R}^V:\langle x,1_V\rangle = 0 \}$ of zero-mean functions over $V$. 
It is straightforward to check that function $x\in\mathbb{R}^V_0\mapsto \sum_{e\in{\vec E}}|\grad x|(e)$ is a semi-norm on $\mathbb{R}_0^V$ and a norm when $G$ is connected. It is denoted $\|\cdot\|_{\tv}$ throughout the paper.
Although operator $\grad$ depends on the orientation chosen for $E$, note that $\|\cdot\|_{\tv}$ does not.

The dual space $(\mathbb{R}_0^V)^*$ identified with $\mathbb{R}_0^V$ using the standard scalar product is equipped with the dual norm:
\begin{equation}
\|u\|_* = \max_{\|x\|_{\tv}\leq 1}\langle x,u\rangle\,.
\label{eq:meyer}
\end{equation}
We introduce the unit ball:
$$
B_* = \{u\,:\, \|u\|_*\leq 1\}\ .
$$
Another characterization of the dual norm is the following.
For a vector field $\xi\in\mathbb{R}^{{\vec E}}$, we denote by $\|\xi\|_\infty=\max\{|\xi(e)|: e\in {\vec E}\}$.
The following proposition provides a characterization of the dual norm. Its proof is adapted from \cite{adams:1975}.


\begin{proposition}
  \label{prop:meyer-ball}
If $G$ is a connected graph, the following equality holds true:
  \begin{equation}
  \|u\|_* = \inf\{\|\xi\|_\infty: u = \div\xi\}\,.
 \label{eq:dual}
\end{equation}
\end{proposition}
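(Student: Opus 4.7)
The plan is to establish the two inequalities in \eqref{eq:dual} separately. The easy direction, $\|u\|_* \leq \inf\{\|\xi\|_\infty : u=\div\xi\}$, rests on integration by parts: for any $\xi$ with $\div\xi = u$ and any $x\in\mathbb{R}_0^V$ with $\|x\|_\tv\leq 1$, one has $\langle x,u\rangle = \langle x,\div\xi\rangle = -\langle\grad x,\xi\rangle$, hence $|\langle x,u\rangle|\leq \|\grad x\|_1\|\xi\|_\infty = \|x\|_\tv\|\xi\|_\infty\leq \|\xi\|_\infty$. Taking the supremum over $x$ and then the infimum over $\xi$ gives the bound. Before invoking this one should check that $\{\xi : \div\xi = u\}$ is nonempty: since $G$ is connected, $\ker(\grad)=\cC$, so by the rank–nullity/adjoint identity $\mathrm{Im}(\div) = \ker(\grad)^\perp = \mathbb{R}_0^V\ni u$.

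For the reverse inequality I would use a Hahn–Banach argument in the spirit of the Meyer/Adams duality. Fix any $\xi_0$ with $\div\xi_0 = u$ and set $W := \mathrm{Im}(\grad)\subset\mathbb{R}^{\vec E}$. Since $G$ is connected, $\grad:\mathbb{R}_0^V\to W$ is a linear bijection with $\|\grad x\|_1=\|x\|_\tv$, and integration by parts gives $\langle x,u\rangle = -\langle\grad x,\xi_0\rangle$. Consequently the functional $\varphi:W\to\mathbb{R}$ defined by $\varphi(y):=-\langle y,\xi_0\rangle$ has operator norm exactly $\|u\|_*$ on $(W,\|\cdot\|_1)$: indeed, letting $y=\grad x$ and using that $x\mapsto -x$ preserves $\|\cdot\|_\tv$,
\begin{equation*}
\sup_{y\in W,\,\|y\|_1\leq 1}|\varphi(y)| \;=\; \sup_{x\in\mathbb{R}_0^V,\,\|x\|_\tv\leq 1}|\langle x,u\rangle| \;=\; \|u\|_*.
\end{equation*}

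Hahn–Banach then extends $\varphi$ to a linear functional $\tilde\varphi$ on all of $(\mathbb{R}^{\vec E},\|\cdot\|_1)$ with the same norm $\|u\|_*$. Since the dual of $(\mathbb{R}^{\vec E},\|\cdot\|_1)$ is $(\mathbb{R}^{\vec E},\|\cdot\|_\infty)$, there exists $\eta\in\mathbb{R}^{\vec E}$ with $\tilde\varphi(\zeta)=\langle\zeta,\eta\rangle$ for all $\zeta$ and $\|\eta\|_\infty=\|u\|_*$. Restriction to $W$ yields $\langle y,\eta+\xi_0\rangle=0$ for every $y\in\mathrm{Im}(\grad)$, i.e.\ $\eta+\xi_0\perp\mathrm{Im}(\grad)$. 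Applying integration by parts once more, this forces $\div(\eta+\xi_0)=0$ (because $\div\zeta$ lies in $\mathbb{R}_0^V$ for any $\zeta$ and is determined by its pairings with $\mathbb{R}_0^V$), so $\div(-\eta)=\div\xi_0=u$. Thus $\xi:=-\eta$ is feasible for the right-hand side of \eqref{eq:dual} and satisfies $\|\xi\|_\infty=\|u\|_*$, which both closes the inequality and shows the infimum is attained.

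The main subtlety is the Hahn–Banach step together with careful sign-keeping arising from $\grad^\ast=-\div$: one must verify that restricting $\varphi$ to the subspace $W\subset\mathbb{R}^{\vec E}$ does not lose mass (which is where connectedness of $G$ enters crucially, via the identity $\mathrm{Im}(\grad)=\ker(\div)^\perp$), and that the extension $\eta$ can be converted, through orthogonality, into an honest preimage of $u$ under $\div$. The rest is routine finite-dimensional linear algebra on the graph.
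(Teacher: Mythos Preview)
Your proof is correct and follows essentially the same Hahn--Banach strategy as the paper: the paper also uses that $\grad:(\mathbb{R}_0^V,\|\cdot\|_\tv)\to(\mathrm{Im}(\grad),\|\cdot\|_1)$ is an isometry, extends the induced functional to all of $(\mathbb{R}^{\vec E},\|\cdot\|_1)$ without increasing its norm, and reads off the desired $\xi$ from the $\|\cdot\|_\infty$-representation of the extension. Your only cosmetic difference is the introduction of an auxiliary $\xi_0$ to make the orthogonality step $\eta+\xi_0\in\mathrm{Im}(\grad)^\perp=\ker(\div)$ explicit, whereas the paper defines the functional directly via $\grad x\mapsto\langle u,x\rangle$; both routes arrive at the same $\xi=-\eta$ (respectively $\xi=-\varphi$) with $\|\xi\|_\infty=\|u\|_*$.
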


The following property is a consequence of a general fact about subdifferentials of support functions:
\begin{proposition}
  \label{prop:subgradient-norm}
  If $\partial\|x\|_{\tv}$ denotes the subdifferential of norm $\|\cdot\|_{\tv}$ at point $x$, one has:
\[
\partial \|x\|_{\tv} = \{u\in\mathbb{R}_0^V:\|u\|_*\leq 1, \langle u,x\rangle = \|x\|_{\tv}\}
\]
In particular, $\partial \|0\|_{\tv} = B_*$\,.
\end{proposition}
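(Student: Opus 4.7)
The plan is to deduce this characterization of $\partial\|\cdot\|_{\tv}$ from the standard fact that a norm is the support function of the unit ball of its dual norm. Concretely, I would first observe that, on the finite-dimensional space $\mathbb{R}_0^V$ equipped with the standard scalar product, the biduality relation for norms gives
\[
\|x\|_{\tv} \;=\; \sup_{u\in B_*}\,\langle u,x\rangle \;=\; \sigma_{B_*}(x),
\]
i.e.\ $\|\cdot\|_{\tv}$ is exactly the support function of the closed convex set $B_*$. This follows from~(\ref{eq:meyer}) by symmetry of the pairing between a norm and its dual in finite dimension.

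Next I would invoke the classical result from convex analysis that, for any nonempty closed convex set $C$, the subdifferential of its support function at a point $x$ is the face of $C$ exposed by $x$:
\[
\partial \sigma_C(x) \;=\; \{\,u\in C \,:\, \langle u,x\rangle = \sigma_C(x)\,\}.
\]
Applying this with $C = B_*$ and $\sigma_C = \|\cdot\|_{\tv}$ yields immediately
\[
\partial\|x\|_{\tv} \;=\; \{\,u\in\mathbb{R}_0^V : \|u\|_*\leq 1,\ \langle u,x\rangle = \|x\|_{\tv}\,\},
\]
which is the announced identity.

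For the special case $x=0$, the constraint $\langle u,x\rangle = \|x\|_{\tv}$ degenerates to $0=0$ and becomes vacuous, so $\partial\|0\|_{\tv} = B_*$, as stated. Alternatively, and as a short self-contained check if one does not want to cite the support-function lemma, I would verify the characterization directly from the subgradient inequality $\|y\|_{\tv} \geq \|x\|_{\tv} + \langle u, y-x\rangle$ for all $y\in\mathbb{R}_0^V$: testing at $y=0$ and $y=2x$ forces $\langle u,x\rangle = \|x\|_{\tv}$ by positive homogeneity, and the remaining inequality $\|y\|_{\tv}\geq \langle u,y\rangle$ for all $y$ is precisely the condition $\|u\|_*\leq 1$.

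I do not anticipate a real obstacle here; the only point requiring a sentence of care is the identification of $\|\cdot\|_{\tv}$ with $\sigma_{B_*}$ on the subspace $\mathbb{R}_0^V$ (not on the whole of $\mathbb{R}^V$), but since $B_*$ lives in $\mathbb{R}_0^V$ and the dual pairing is taken there, this is simply the finite-dimensional biduality of a norm with its dual.
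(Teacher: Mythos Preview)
Your proposal is correct and aligns with the paper's own treatment: the paper explicitly frames the proposition as ``a consequence of a general fact about subdifferentials of support functions,'' which is precisely your main approach via $\|\cdot\|_{\tv}=\sigma_{B_*}$ and $\partial\sigma_C(x)=\{u\in C:\langle u,x\rangle=\sigma_C(x)\}$. The paper's written proof is the direct verification you sketch as an alternative; the only cosmetic difference is that the paper introduces an auxiliary maximizer $x_u$ with $\|x_u\|_{\tv}=1$ and $\langle u,x_u\rangle=\|u\|_*$ to extract $\|u\|_*\leq 1$, whereas your test points $y=0$ and $y=2x$ yield the same conclusion a bit more directly.
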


\subsection{Co-area Formula}

First remark that $\|\cdot\|_{\tv}$ can be extended into a semi-norm on $\mathbb{R}^V$ using the same definition:
\[
\|x\|_{\tv} = \sum_{e\in{\vec E}} |\grad x|(e)\;.
\]
Using this definition, one has $\|x+c 1_V\|_\tv = \|x\|_\tv$ for any $c\in\mathbb{R}$ and any $x\in\mathbb{R}^V$.
The perimeter $\per(S)$ of a subset $S\subset V$ is defined as
\[
\per(S) = \|1_S\|_{\tv}\;.
\]
The following lemma, also known in the context of real analysis as the \emph{coarea formula}, will be helpful to prove Proposition~\ref{prop:meyer-perim}.

\begin{lemma}
  For a function $x\in\mathbb{R}^V$, we denote by $\{x\geq\lambda\} = \{v\in V: x(v)\geq\lambda\}$ the upper-level set associated with level $\lambda$. The following equality holds true:
\[
\|x\|_{\tv} = \int_{-\infty}^{+\infty}\per(\{x\geq\lambda\})\d\lambda\,.
\]
\label{prop:coarea}
\end{lemma}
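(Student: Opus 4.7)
The plan is to prove the formula edge by edge and then sum. The key observation is that, for a single oriented edge $e=(v,w)$, the quantity $|\grad x(e)|=|x(w)-x(v)|$ can be written as the length of the interval of levels $\lambda$ separating $x(v)$ from $x(w)$.

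First I would establish the following pointwise identity: for every oriented edge $e=(v,w)$,
\begin{equation*}
|x(w)-x(v)| = \int_{-\infty}^{+\infty}\bigl|\one_{\{x\geq\lambda\}}(w)-\one_{\{x\geq\lambda\}}(v)\bigr|\,\d\lambda.
\end{equation*}
Assuming without loss of generality that $x(v)\leq x(w)$, the integrand equals $1$ exactly when $x(v)<\lambda\leq x(w)$ and $0$ otherwise, so the integral evaluates to $x(w)-x(v)=|x(w)-x(v)|$.

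Next I would sum this identity over $e\in\vec E$ and swap the finite sum with the integral (a direct application of Fubini, or simply linearity of the Riemann integral since $\vec E$ is finite). This yields
\begin{equation*}
\|x\|_{\tv}=\sum_{e\in\vec E}|\grad x(e)|=\int_{-\infty}^{+\infty}\sum_{(v,w)\in\vec E}\bigl|\one_{\{x\geq\lambda\}}(w)-\one_{\{x\geq\lambda\}}(v)\bigr|\,\d\lambda.
\end{equation*}
Finally I would recognize the inner sum as $\|\one_{\{x\geq\lambda\}}\|_{\tv}=\per(\{x\geq\lambda\})$ by definition of the perimeter and of the TV semi-norm on $\bR^V$, which closes the proof.

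There is no real obstacle here: the argument is essentially a layer-cake decomposition of $x$. The only small points to check are that the integral is well defined (the integrand is nonnegative and vanishes outside the compact interval $[\min_v x(v),\max_v x(v)]$, so everything is finite) and that the definition $\per(S)=\|\one_S\|_{\tv}$ extended to $\bR^V$ coincides with the boundary-edge count $|\{(v,w)\in\vec E:\one_S(v)\neq\one_S(w)\}|$, which is immediate from $|\grad\one_S(e)|\in\{0,1\}$.
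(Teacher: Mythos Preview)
Your proof is correct and is essentially the same argument as the paper's: both prove the identity edge by edge via the observation that, for an edge $e=\{v,w\}$, the integrand $|\one_{\{x\geq\lambda\}}(w)-\one_{\{x\geq\lambda\}}(v)|$ equals the indicator $\one_{I_e}(\lambda)$ of the interval $I_e=[x(v)\wedge x(w),\,x(v)\vee x(w)]$ (up to endpoints), and then sum over edges and integrate. The only cosmetic difference is the order of operations: the paper first writes $\per(\{x\geq\lambda\})=\sum_{e}\one_{I_e}(\lambda)$ and then integrates, whereas you first integrate edgewise and then sum.
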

The following useful result can be seen as an extension of the immediate formula $\|u\|_* = \max_{x\in\mathbb{R}^V}{\langle u,x\rangle}/{\|x\|_\tv}\,.$

\begin{proposition}
\label{prop:meyer-perim}
  Assume $u$ is in $(\mathbb{R}_0^V,\|\cdot\|_*)$. Then, using the canonical embedding $\mathbb{R}_0^V\subset\mathbb{R}^V$ and the standard inner product $\langle\cdot,\cdot\rangle$ over $\mathbb{R}^V$,
the following equalities hold true:
\begin{eqnarray*}
\|u\|_* &=& \max_{\emptyset\subsetneq S\subsetneq V}\frac{\langle u,1_S\rangle}{\|1_S\|_{\tv}} \\
 &=& \max_{ {\emptyset\subsetneq S\subset V, |S| \leq |V|/2} \atop {G(S) \mbox{ is connected}}}\frac{|\langle u,1_S\rangle|}{\|1_S\|_{\tv}}\,.
\end{eqnarray*}
\end{proposition}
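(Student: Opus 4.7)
The plan is to establish the two equalities in sequence, using the coarea formula from Lemma~\ref{prop:coarea} for the first one, and then exploiting the complement-symmetry (built into the constraint $u \in \mathbb{R}_0^V$) plus a connected-components decomposition for the second.

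For the first equality $\|u\|_* = \max_{\emptyset \subsetneq S \subsetneq V} \frac{\langle u, 1_S\rangle}{\|1_S\|_{\tv}}$, I would first prove the easy direction ($\geq$): given a proper nonempty $S$, the function $y_S = 1_S - \tfrac{|S|}{|V|}\,1_V$ lies in $\mathbb{R}_0^V$, satisfies $\|y_S\|_{\tv} = \|1_S\|_{\tv}$ since TV is invariant under additive constants, and satisfies $\langle u, y_S\rangle = \langle u, 1_S\rangle$ because $\langle u, 1_V\rangle = 0$. So $y_S/\|y_S\|_\tv$ is admissible in the definition of $\|u\|_*$. For the reverse direction, I would take an arbitrary $x\in\mathbb{R}_0^V \setminus\{0\}$ and use the shift-invariance of the ratio to reduce to the case $x \geq 0$, $\min_v x(v) = 0$. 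Then the layer-cake decomposition gives $\langle u, x\rangle = \int_0^{\max x} \langle u, 1_{\{x\geq\lambda\}}\rangle\,\d\lambda$, and Lemma~\ref{prop:coarea} gives $\|x\|_{\tv} = \int_0^{\max x} \per(\{x\geq\lambda\})\,\d\lambda$. Because $G$ is connected, $\per(\{x\geq\lambda\}) > 0$ for $\lambda\in(0,\max x]$ (these sublevel sets are proper and nonempty), so denoting $M^\star$ the RHS in the first equality, pointwise we have $\langle u, 1_{\{x\geq\lambda\}}\rangle \leq M^\star\,\per(\{x\geq\lambda\})$, and integrating yields $\langle u, x\rangle/\|x\|_{\tv} \leq M^\star$.

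For the second equality, the first step is to observe that for any proper nonempty $S$, $\langle u, 1_{V\setminus S}\rangle = -\langle u, 1_S\rangle$ (again because $u\perp 1_V$) while $\|1_{V\setminus S}\|_{\tv} = \|1_S\|_{\tv}$; hence the $\max$ without absolute value coincides with the $\max$ of $|\langle u,1_S\rangle|/\|1_S\|_{\tv}$, and we may WLOG impose $|S|\leq |V|/2$ (replacing $S$ by its complement if necessary). Next, I would decompose such an $S$ into the connected components $S_1,\dots,S_k$ of the induced subgraph $G(S)$. A short combinatorial check shows $\|1_S\|_{\tv} = \sum_i \|1_{S_i}\|_{\tv}$: every edge counted in $\|1_{S_i}\|_{\tv}$ must go from $S_i$ to $V\setminus S$ (it cannot go to another $S_j$, by the connectivity-component definition), and these contributions partition the boundary of $S$. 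Together with $\langle u, 1_S\rangle = \sum_i \langle u, 1_{S_i}\rangle$ and the elementary inequality $|\sum_i a_i|/\sum_i b_i \leq \max_i |a_i|/b_i$ for $b_i > 0$, we get that one of the $S_i$ attains a ratio at least as large as $S$ itself, while still satisfying $|S_i| \leq |S| \leq |V|/2$. Thus the unrestricted max in the first equality is bounded by the restricted max, while the reverse inequality is immediate.

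The most delicate step is the coarea-based reduction in the first equality: I need to confirm that the shift-invariance argument legitimately replaces an arbitrary $x\in\mathbb{R}_0^V$ with a nonnegative representative (and to check that the boundary values $\lambda=0$ and $\lambda=\max x$ contribute nothing, since $\per(V) = \per(\emptyset) = 0$ and $\langle u, 1_V\rangle = 0$). The rest is a clean bookkeeping of complements and connected components, with no further analytic subtlety.
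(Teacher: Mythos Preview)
Your proposal is correct and follows essentially the same route as the paper: coarea/layer-cake plus shift-invariance for the first equality, and complement symmetry plus a connected-components mediant inequality for the second. The only cosmetic difference is that for the first equality the paper works with a \emph{maximizer} $x_u$ of the dual norm and shows one of its upper-level sets attains the ratio $\|u\|_*$, whereas you bound the ratio for \emph{every} $x$ by $M^\star$ and pair this with the easy direction; the underlying machinery is identical.
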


\subsection{Dual norm computation}
\label{subsec:dualnormcomp}

As will be made clear in Section \ref{sec:minimizers}, it is essential to have in practice an efficient algorithm for the computation of the dual norm. In that perspective, Proposition~\ref{prop:meyer-perim} helps. We now propose a strongly polynomial-time combinatorial algorithm to compute the dual norm of a vector.


From Proposition \ref{prop:meyer-perim}, we know that $\|u\|_*$ can be computed by enumeration of all subsets $A$ of size  at most $|V|/2$ inducing a connected subgraph. The number of such subsets is polynomially bounded for some classes of graphs (e.g.,  paths and   cycles). However, in the general case, their number might not be polynomial. Another way to compute  $\|u\|_*$ consists in using either \eqref{eq:meyer} or \eqref{eq:dual}. Observe that \eqref{eq:meyer} or \eqref{eq:dual} are linear programs that can be solved in polynomial time using any standard linear programming algorithm.  In fact, \eqref{eq:dual} is simply the dual program of \eqref{eq:meyer}. Even if linear programming algorithms are very efficient, we will describe a strongly polynomial-time combinatorial algorithm to compute the dual norm of a vector, that is both practical and simple.

 \begin{breakbox}
   \noindent {\bf Algorithm~0}\label{algorithm:0}
 \begin{itemize}
 \item Select any subset $ \emptyset\subsetneq A_0 \subsetneq V$, let $\lambda_1 =  \frac{|\langle u,1_{A_0} \rangle|}{\per(A_0)}$ and $i=1$.

 \item Repeat
 \begin{itemize}
 \item Let $A_i =  \arg\max_{A \subset V} \langle u, 1_{A} \rangle  - \lambda_i \per(A)$

 \item Let $\lambda_{i+1} = \frac{\langle u,1_{A_i} \rangle}{\per(A_i)}$ and $i = i+1$
 \end{itemize}

 \item  Until $\lambda_{i} = \lambda_{i-1}$

 \end{itemize}

 \end{breakbox}

 Details related to the computation of  $\arg\max_{A \subset V} \langle u, 1_{A} \rangle  - \lambda_i \per(A)$ will be given later.

The following proposition is proved in Appendix~\ref{app:cdn}. 

 \begin{proposition}
 Algorithm~0 stops after at most $O(|E|)$ iterations. $\|u\|_*$  is given by the value of $\lambda_i$ at the last iteration.
 \label{prop:convergence0}
 \end{proposition}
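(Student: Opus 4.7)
The plan is to view Algorithm~0 as a Dinkelbach-type scheme for the fractional maximization
\[
  \|u\|_* \;=\; \max_{\emptyset \subsetneq A \subsetneq V}\; \frac{\langle u,1_A\rangle}{\per(A)},
\]
which is furnished by Proposition~\ref{prop:meyer-perim}; the absolute value in the initialization of $\lambda_1$ only serves to make $\lambda_1\ge 0$, since $u\in\mathbb{R}_0^V$ lets us replace $A_0$ by $V\setminus A_0$ without changing $|\langle u,1_{A_0}\rangle|$ or $\per(A_0)$. I may therefore assume throughout that $\langle u,1_{A_i}\rangle\ge 0$.

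I would first prove that the $\lambda_i$ are nondecreasing and that termination yields the correct value. Let $L_i := \max_{A\subset V}\langle u,1_A\rangle-\lambda_i\per(A)$. Because $A_{i-1}$ is feasible and, by the very definition of $\lambda_i$, achieves value $\langle u,1_{A_{i-1}}\rangle-\lambda_i\per(A_{i-1})=0$, one has $L_i\ge 0$. Since $L_i=(\lambda_{i+1}-\lambda_i)\per(A_i)$, we have $L_i=0\iff\lambda_{i+1}=\lambda_i$. When $L_i=0$, the definition of $L_i$ forces $\lambda_i\ge \langle u,1_A\rangle/\per(A)$ for every nonempty $A$, hence $\lambda_i\ge\|u\|_*$; conversely $\lambda_i=\langle u,1_{A_{i-1}}\rangle/\per(A_{i-1})\le\|u\|_*$, so $\lambda_i=\|u\|_*$ at termination, which is the second assertion.

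For the $O(|E|)$ bound on the iteration count, I would combine the optimality inequalities at steps $i$ and $i+1$,
\begin{align*}
  \langle u,1_{A_i}\rangle-\lambda_i\per(A_i) &\;\ge\; \langle u,1_{A_{i+1}}\rangle-\lambda_i\per(A_{i+1}),\\
  \langle u,1_{A_{i+1}}\rangle-\lambda_{i+1}\per(A_{i+1}) &\;\ge\; \langle u,1_{A_i}\rangle-\lambda_{i+1}\per(A_i),
\end{align*}
whose sum yields $(\lambda_{i+1}-\lambda_i)(\per(A_i)-\per(A_{i+1}))\ge 0$. As long as the stopping test has not fired, i.e., $\lambda_{i+1}>\lambda_i$, this gives $\per(A_{i+1})\le\per(A_i)$. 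Moreover, if equality $\per(A_i)=\per(A_{i+1})$ held, both inequalities above would force $\langle u,1_{A_i}\rangle=\langle u,1_{A_{i+1}}\rangle$, whence $\lambda_{i+2}=\lambda_{i+1}$ and the algorithm would terminate at the very next iteration. Hence, apart from perhaps the final step, $\per(A_i)$ strictly decreases; since $\per(A_i)$ is a positive integer in $\{1,\dots,|E|\}$, the algorithm terminates after $O(|E|)$ iterations.

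The main obstacle is precisely this strict decrease of the perimeter: a generic Dinkelbach analysis would only guarantee finite termination in the combinatorial setting, and the quantitative $O(|E|)$ bound relies on the coupled optimality conditions above together with the integrality and boundedness of $\per$.
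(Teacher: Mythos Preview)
Your proof is correct and follows the same Dinkelbach-type argument as the paper: both establish monotonicity of $\lambda_i$, correctness at termination via $L_i=0$, and the $O(|E|)$ bound by showing that the perimeter $\per(A_i)$ strictly decreases through coupled optimality inequalities at consecutive steps. The only cosmetic difference is that the paper compares $A_{i-1}$ with $A_i$ and obtains the strict decrease directly from the strict inequality $L_i>0$, whereas you compare $A_i$ with $A_{i+1}$ and handle the equality case separately (showing it forces termination at the next step); your indexing is arguably cleaner since both sets involved are genuine argmaxes, while the paper's argument tacitly treats the arbitrary initial $A_0$ as if it were one.
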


\modif{In order to make {\bf Algorithm~0} practical, we still must specify how to solve the subproblem  $\max_{A \subset V} \langle u, 1_{A} \rangle  - \lambda \per(A)$. Let us now mention how this subproblem reduces to a standard \emph{max-flow/min-cut} problem \cite{korte:vygen:2012}}.

Recall that a network $(N,L,c)$ in graph theory sense is defined by a directed graph $(N,L)$ and a capacity  assignment $c_{(v,w)} \geq 0$ for any link  $(v,w) \in L$ .

Given the undirected graph $G$ and a vector $u \in \mathbb{R}_0^V$, we build a network $(N=V \cup \{s,t\}, L,c)$  as follows. For each edge $\{w,v\} \in E$ we create two directed edges $(w,v)$ and $(v,w)$ each of capacity $c_{(v,w)}=c_{(w,v)} = \lambda$.  In addition to all nodes of $V$, we add two other nodes: a source $s$ and a sink $t$.  Given any node $v$, if $u_v > 0$, we create a directed edge $(s,v)$ of capacity $u_v$, while an arc $(v,t)$ of capacity $|u_v|$ is added if $u_v <0$.    Thus, the set of links of the network is given by  $L=\{(v,w), \{v,w\} \in E\} \cup \{(s,v), u_v >0\} \cup \{(v,t), u_v < 0\}$.

Let  $A$ be  any subset of vertices of $V$ and  let $\delta^+ (A \cup \{s\})$ denote the set of directed edges having only their first extremity in  $S \cup \{s\}$.  $\delta^+ (A \cup \{s\})$ is generally called a cut. This cut separates $s$ and $t$ in the sense that  $s \in A \cup \{s\}$ while $t \notin A \cup \{s\}$.

  The capacity of this cut is defined as the sum of the capacities of the directed edges included in the cut. Let us denote it by $c(\delta^+ (A \cup \{s\}))$.   It is easy to see that $c(\delta^+ (A \cup \{s\}))$ is given by:
 \begin{eqnarray*}
 c(\delta^+ (A \cup \{s\}))  & =  &  \lambda \per(A) -   \sum\limits_{v \in A, u_v <0} u_v   +  \sum\limits_{v \in V \setminus A, u_v >0} u_v\nonumber \\
    & = &  \lambda  \per(A) -  \sum\limits_{v \in A} u_v + \sum\limits_{v \in A, u_v >0} u_v    +  \sum\limits_{v \in V \setminus A, u_v >0} u_v \nonumber \\
    & = & \lambda \per(A) - \langle u, 1_{A} \rangle +  \sum\limits_{v \in V, u_v >0} u_v \nonumber
 \end{eqnarray*}

 Observe that $c(\delta^+ (A \cup \{s\}))$ is the sum of the term  $\lambda \per(A) - \langle u, 1_{A} \rangle$ and a constant term not depending on $A$. Then, computing a minimum-capacity cut is clearly equivalent to finding a subset $A$ maximizing   $\max_{A \subset V} \langle u, 1_{A} \rangle  - \lambda \per(A)$.  A minimum-capacity cut can be computed using any maximum-flow/minimum-cut algorithm such as the Edmonds-Karp's algorithm, the Goldberg-Tarjan's algorithm or Orlin's Algorithm (see, e.g., \cite{korte:vygen:2012}).

Since each iteration of Algorithm~0 calls such a maximum-flow subroutine, and Proposition~\ref{prop:convergence0} asserts that there is at most $|E|$ iterations (see the appendix for a proof), the overall complexity of Algorithm~0 is consequently given by $|E|$ times the complexity of the maximum-flow algorithm (which depends on the algorithm used).

\section{Regularized minimizers}
\label{sec:minimizers}

Define function $F:\bR^V\to\bR$ by $F(x) = \sum_v f_v(x(v))$. For any $x\in \bR$, one has:
$$
\partial F(x1_V) = \left\{ u\in \bR^V : \forall v\in V,\, u(v)\in \partial f_v(x)\right\}\,.
$$
When all $f_v$'s are differentiable, note that $\partial F(x 1_V)$ is a singleton
$\{(f'_v(x))_{v\in V}\}$.
Recall that $B_*$ is the unit ball associated with the dual norm.

\begin{theorem}
Among the following statements, 1) 2) 3) are equivalent and  imply 4).
\begin{enumerate}
\item $x^\star1_V$ is a minimizer of~(\ref{eq:regTV}) ;
\item $\partial F(x^\star 1_V)\cap \lambda B_*$ is nonempty ;
\item There exists $u\in \partial F(x^\star 1_V)$ such that
$\sum_{v\in V}u(v)=0$ and for all $A\subset V$,
$$
\sum_{v\in A} u(v) \leq \lambda \per(A)\,.
$$
\item $x^\star$ is a minimizer of~(\ref{eq:pb}).
\end{enumerate}
\label{the:min}
\end{theorem}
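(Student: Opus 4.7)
The plan is to establish the chain $1)\Leftrightarrow 2)\Leftrightarrow 3)$ by convex analysis, and then to obtain $1)\Rightarrow 4)$ by a one-line substitution. Only convexity and properness of $F$ (Assumption~1(a)) are needed for the equivalences; Assumption~1(b) is not invoked.

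For $1)\Leftrightarrow 2)$, I would apply Fermat's rule together with the subdifferential sum rule, legitimate here because $\|\cdot\|_{\tv}$ is everywhere finite (in fact Lipschitz) on $\bR^V$. This yields
\[
x^\star\one_V\text{ minimizes }(\ref{eq:regTV})\ \Longleftrightarrow\ 0\in\partial F(x^\star\one_V)+\lambda\,\partial\|x^\star\one_V\|_{\tv}.
\]
The key intermediate claim is $\partial\|x^\star\one_V\|_{\tv}=B_*$, viewed inside $\bR^V$. To see this, I test the subgradient inequality $\|y\|_{\tv}\geq\langle u,y-x^\star\one_V\rangle$ first against $y=x^\star\one_V+c\one_V$ ($c\in\bR$), which forces $\langle u,\one_V\rangle=0$; then, using translation invariance $\|y\|_{\tv}=\|y-\bar y\one_V\|_{\tv}$, the general inequality reduces to $\|z\|_{\tv}\geq\langle u,z\rangle$ for all $z\in\bR_0^V$, i.e.\ to $\|u\|_*\leq 1$. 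This is precisely the $x=0$ case of Proposition~\ref{prop:subgradient-norm}. Since $B_*$ is symmetric, the optimality condition rewrites as the existence of $u\in\partial F(x^\star\one_V)\cap\lambda B_*$, which is~2). A direct verification of $2)\Rightarrow 1)$ also comes essentially for free: for any $u\in\partial F(x^\star\one_V)\cap\lambda B_*$ and any $y\in\bR^V$, convexity of $F$ together with $\langle u,\one_V\rangle=0$ and the dual-norm bound give $F(y)-F(x^\star\one_V)\geq\langle u,y-\bar y\one_V\rangle\geq-\lambda\|y\|_{\tv}$.

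For $2)\Leftrightarrow 3)$, note that since $\lambda B_*\subset\bR_0^V$, any witness $u$ automatically satisfies $\sum_v u(v)=0$, and the remaining requirement $\|u\|_*\leq\lambda$ translates, via Proposition~\ref{prop:meyer-perim}, into $\langle u,\one_A\rangle\leq\lambda\per(A)$ for every proper subset $\emptyset\subsetneq A\subsetneq V$; the trivial subsets $A=\emptyset$ and $A=V$ contribute $0\leq 0$ and can be appended without cost, giving~3). Conversely, 3) immediately implies both $u\in\bR_0^V$ (taking $A=V$ and $A=\emptyset$ or using symmetry with $V\setminus A$) and $\|u\|_*\leq\lambda$ by the same proposition. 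Finally, $1)\Rightarrow 4)$ is immediate: for every $x\in\bR$, the vector $x\one_V$ lies in $\cC$ so $\|x\one_V\|_{\tv}=0$, and the minimality of $x^\star\one_V$ in (\ref{eq:regTV}) collapses to $\sum_v f_v(x^\star)\leq\sum_v f_v(x)$.

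The only mild technicality is the identification of $\partial\|x^\star\one_V\|_{\tv}$ as a subset of the ambient space $\bR^V$ rather than of $\bR_0^V$ as stated in Proposition~\ref{prop:subgradient-norm}; translation invariance of the semi-norm along $\cC$ handles this in one line, as outlined above. Everything else reduces to invoking the two preliminary propositions of Section~\ref{sec:tvbasics}.
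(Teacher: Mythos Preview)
Your proof is correct and follows essentially the same route as the paper: Fermat's rule plus the sum rule for $1)\Leftrightarrow 2)$, the identification $\partial\|x^\star 1_V\|_{\tv}=B_*$ from Proposition~\ref{prop:subgradient-norm}, and Proposition~\ref{prop:meyer-perim} for $2)\Leftrightarrow 3)$. The only notable difference is how you reach 4): the paper goes $3)\Rightarrow 4)$ via $\sum_v u(v)=0$ with $u(v)\in\partial f_v(x^\star)$, hence $0\in\partial(\sum_v f_v)(x^\star)$; you instead argue $1)\Rightarrow 4)$ by plugging $y=x\,1_V$ into~(\ref{eq:regTV}) and using $\|x\,1_V\|_{\tv}=0$. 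Your route is slightly more elementary and avoids the subdifferential sum rule for $\sum_v f_v$; the paper's route makes the first-order optimality of $x^\star$ in~(\ref{eq:pb}) explicit. Your care about the ambient space $\bR^V$ versus $\bR_0^V$ when identifying $\partial\|x^\star 1_V\|_{\tv}$ is a detail the paper leaves implicit.
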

\begin{proof}
[1) $\Leftrightarrow$ 2)] Note that $x^\star1_V$ is a minimizer of~$F+\lambda \|\cdot\|_\tv$ iff
$0\in \partial F(x^\star1_V) + \lambda \partial \|x^\star1_V\|_\tv$.
From Proposition \ref{prop:subgradient-norm}, $ \partial \|x^\star1_V\|_\tv=B_*$.
Therefore, 1) holds iff there exists $u\in \partial F(x^\star1_V)$ such that
$0\in u+\lambda B_*$. Otherwise stated, there exists $u\in \partial F(x^\star1_V)$
such that $u\in \lambda B_*$.
[2) $\Leftrightarrow$ 3)] is a consequence of Proposition~\ref{prop:meyer-perim}.
[3) $\Rightarrow$ 4)] As $\sum_v\partial f_v= \partial (\sum_v f_v)$, condition $\sum_{v\in V}u(v)=0$ implies that $0\in \partial (\sum_v f_v)(x^\star)$.
Thus,  $x^\star$ is a minimizer of $\sum_v f_v$.
\end{proof}

\modif{In order to have some insights about Theorem~\ref{the:min}, assume for instance that all $f_v$'s are differentiable.
Condition 2) of Theorem~\ref{the:min} can be simply rewritten as
\begin{equation}
  \label{eq:1}
  \|\nabla F(x^\star 1_V)\|_*\leq \lambda
\end{equation}
where $\nabla F(x^\star 1_V)$ is the gradient vector whose $v$th component is $f'_v(x^\star)$.
Now consider a solution $x^\star\in \bR$ to the initial problem~(\ref{eq:pb}). Theorem~\ref{the:min} states that whenever this solution satisfies~(\ref{eq:1}),
then $x^\star 1_V$ is also a solution to the relaxed problem~(\ref{eq:regTV}).}
\modif{Of course, condition~(\ref{eq:1}) remains a little abstract unless we have a way to verify the latter.
Algorithm~0 of Section~\ref{subsec:dualnormcomp} provides a practical method to compute the dual norm which can be used to verify condition~(\ref{eq:1}) in practice.
By statement 3) of Theorem~\ref{the:min}, condition~(\ref{eq:1}) is equivalent to:
\begin{equation}
\label{eq:condDeriv}
\forall A\subset V,\ \sum_{v\in A} f'_v(x^\star) \leq \lambda \per(A)\ .
\end{equation}
We thus have the following Corollary.
\begin{corollary}
  Assume that $f_v$ is differentiable for all $v\in V$. Let $x^\star$ be a minimizer of~(\ref{eq:pb}) satisfying condition~(\ref{eq:condDeriv}).
Then, $x^\star 1_V$ is a minimizer of~(\ref{eq:regTV}).
\end{corollary}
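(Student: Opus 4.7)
The plan is to apply Theorem~\ref{the:min} directly: in the differentiable setting, the hypothesis~(\ref{eq:condDeriv}) is essentially a restatement of condition~3) of that theorem evaluated at the unique subgradient. So the proof will amount to checking the ingredients of condition~3) one by one and then invoking the implication $3)\Rightarrow 1)$.

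First, I would observe that because each $f_v$ is differentiable, the subdifferential $\partial F(x^\star \one_V)$ reduces to the singleton $\{u\}$ with $u(v)=f'_v(x^\star)$. The candidate subgradient is thus uniquely determined and there is no ``choice'' to make in condition~3). Second, since $x^\star$ minimizes $\sum_v f_v$ on $\bR$, the first-order optimality condition for an unconstrained convex differentiable problem yields $\sum_v f'_v(x^\star)=0$, i.e.\ $\sum_{v\in V} u(v)=0$, which is the first half of condition~3). Third, hypothesis~(\ref{eq:condDeriv}) rewrites as
\[
\forall A\subset V,\quad \sum_{v\in A} u(v)\leq \lambda\,\per(A),
\]
which is exactly the second half of condition~3). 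Therefore condition~3) of Theorem~\ref{the:min} is met, and by the equivalence $3)\Leftrightarrow 1)$ proved there, $x^\star\one_V$ is a minimizer of~(\ref{eq:regTV}).

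There is essentially no obstacle: the corollary is a packaging of Theorem~\ref{the:min} in the differentiable case. The only subtlety worth flagging is that condition~(\ref{eq:condDeriv}) already quantifies over \emph{all} subsets $A\subset V$ (including $A=\emptyset$ and $A=V$, for which both sides vanish thanks to $\sum_v u(v)=0$ and $\per(\emptyset)=\per(V)=0$), so the bound automatically applies to complements as well via $\per(A)=\per(A^c)$; one does not need to assume a two-sided inequality. Apart from this cosmetic point, the argument is a direct chain of implications.
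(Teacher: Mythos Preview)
Your proof is correct and follows exactly the route the paper intends: the corollary is stated without a separate proof, as an immediate consequence of Theorem~\ref{the:min} once one notes that differentiability forces $\partial F(x^\star 1_V)=\{\nabla F(x^\star 1_V)\}$ and that minimality of $x^\star$ for~(\ref{eq:pb}) gives the zero-mean condition. Your verification of condition~3) is precisely the unpacking the paper has in mind.
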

}
We now review the consequences of Theorem~\ref{the:min} regarding the Average Consensus and the Median Consensus problems described earlier in the introduction.

\subsection{Average Consensus Problem}

\begin{proposition}
The following statements are equivalent.
\begin{enumerate}
\item $\overline x_01_V$ is the unique minimizer of~(\ref{eq:rof}) ;
\item $x_0-\overline x_0\,1_V\in \lambda B_*$ ;
\item For all $A\subset V$,
$$
\left|\frac{\sum_{v\in A} x_0(v)}{|A|}-\bar x_0\right|\leq\lambda\frac{\per(A)}{|A|}
$$
\end{enumerate}
\label{prop:ac}
\end{proposition}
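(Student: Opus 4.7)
The plan is to specialize Theorem~\ref{the:min} to the quadratic cost $f_v(x) = \tfrac{1}{2}(x-x_0(v))^2$, for which the regularized energy coincides with~(\ref{eq:rof}). Each $f_v$ is differentiable with derivative $f_v'(x) = x - x_0(v)$, so $F(x) = \tfrac{1}{2}\|x-x_0\|_2^2$ is strictly convex on $\bR^V$. Consequently, the objective in~(\ref{eq:rof}) is strictly convex and admits at most one minimizer; moreover, the unique minimizer of the aggregate cost $\sum_v f_v$ on $\bR$ is exactly $\overline x_0$.

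First I would establish 1)~$\Leftrightarrow$~2). Since $F$ is differentiable, Theorem~\ref{the:min} asserts that $\overline x_0\,1_V$ is a minimizer of~(\ref{eq:rof}) if and only if $\nabla F(\overline x_0\,1_V) \in \lambda B_*$. A direct computation yields $\nabla F(\overline x_0\,1_V) = -(x_0 - \overline x_0\,1_V)$, and since $B_*$ is the unit ball of a norm, hence centrally symmetric, this is equivalent to $x_0 - \overline x_0\,1_V \in \lambda B_*$, namely statement 2). Strict convexity of the objective then upgrades ``is a minimizer'' to ``is the unique minimizer'', recovering 1) as stated.

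Next, to obtain 2)~$\Leftrightarrow$~3), I would apply Proposition~\ref{prop:meyer-perim} to the zero-mean vector $u = x_0 - \overline x_0\,1_V \in \bR_0^V$. The condition $\|u\|_* \leq \lambda$ is equivalent to $|\langle u, 1_A\rangle| \leq \lambda\,\per(A)$ for every nonempty proper $A \subsetneq V$: indeed, Proposition~\ref{prop:meyer-perim} gives this bound without absolute value, but since $\langle u, 1_{V\setminus A}\rangle = -\langle u, 1_A\rangle$ (zero-mean property) and $\per(V\setminus A) = \per(A)$, substituting $A$ by $V\setminus A$ produces the opposite sign and yields the absolute value. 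Using $\langle u, 1_A\rangle = \sum_{v\in A} x_0(v) - |A|\,\overline x_0$ and dividing both sides by $|A|$ reproduces exactly the inequality in statement 3); the boundary cases $A = \emptyset$ and $A = V$ are trivial since both sides vanish.

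I do not anticipate any real obstacle: the proof is essentially a transcription of Theorem~\ref{the:min} together with Proposition~\ref{prop:meyer-perim} into the quadratic AC setting. The only points that deserve a brief justification are the uniqueness claim in 1) (immediate from strict convexity of $F$) and the appearance of the absolute value in 3) (reflecting the central symmetry of $B_*$, equivalently the zero-mean property of $u$).
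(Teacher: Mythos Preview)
Your proposal is correct and follows essentially the same approach as the paper: the paper's own proof simply invokes strict convexity for uniqueness and says the equivalence of the three statements follows directly from Theorem~\ref{the:min}. You have merely unpacked that invocation, making explicit the computation $\nabla F(\overline x_0 1_V) = -(x_0-\overline x_0 1_V)$, the central symmetry of $B_*$, and the complement argument that produces the absolute value in statement~3).
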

Proposition~\ref{prop:ac} quantifies how much a local average can fluctuate around $\bar x_0$ in order to preserve the sought equilibrium at $\bar x_0 1$: the larger the ratio $\per(A)/|A|$ the more it can fluctuate safely inside $A$. The heuristic behind this argument is that the ratio $\per(A)/|A|$ measures how well a given region $A$ is connected to the rest of the network, since $\per(A)$ is the number of edges connecting $S$ to its complementary set $V\backslash A$ and $|A|$ is a measure of its size. A large ratio $\per(A)/|A|$ amounts to say that relatively to its size, $A$ has a lot of connections to its outside. Being well connected, a subset $A$ is more able to make its member agree on $\bar x_0$ using its outside neighbors in $V\backslash A$; while a little connected part $A$ should already agree right from the start in order to hope a local consensus.

For $\lambda$ large enough, the critical value being $\|x_0-\bar x_0 1_V \|_*$, the minimizer of~(\ref{eq:rof}) is the sought equilibrium point. In other terms, there is a whole range of values for $\lambda$, namely the interval $[\|x_0 - \bar x_0 1_V \|_*,+\infty)$ for which the minimizer is {\sl exactly} $\bar x_01$.

Notice that if the data $x_0$ belong to a known bounded interval, then it is easy to compute an upper bound of  $\|x_0 - \bar x_0 1_V \|_*$ and to select $\lambda$ above this bound.
Checking this condition requires the computation of $\|x_0 - \bar x_0 1_V \|_*$. As far as computation of the dual norm is concerned, we refer to  Section \ref{subsec:dualnormcomp}.


 \subsection{Median Consensus Problem}
 \label{subsec:medcons}

 We denote by $\median(x_0)$ the set of minimizers of~(\ref{eq:pb}) when $f_v(x) = |x-x_0(v)|$ for all $v\in V$.
 It is straightforward to show that
 $$
 \begin{array}[h]{lll}
 \median(x_0) =&     \{x_0\circ\sigma(\frac{|V|+1}2)\} & \text{ if }|V|\text{ is odd}\\
 \median(x_0) =&    \left[x_0\circ\sigma(\frac{|V|}2)\,,\,x_0\circ\sigma(\frac{|V|}2+1)\right] & \text{ if }|V|\text{ is even.}
   \end{array}
 $$
 where $\sigma:\{1\cdots |V|\}\to V$ is any bijection such that
 $(x_0\circ \sigma) (1)\leq \cdots \leq (x_0\circ \sigma) (|V|)$.
 We use notation $\median(x_0)1_V$ to designate the set of functions $\{x1_V:x\in\median(x_0)\}$.
 We introduce the following sequence
 $$
 d = \left\{
   \begin{array}[h]{ll}
     (-1,\cdots,-1,0,1,\cdots 1) &  \text{ if }|V|\text{ is odd}\\
     (-1,\cdots,-1,1,\cdots 1) &  \text{ if }|V|\text{ is even.}
   \end{array}\right.
 $$
 We define by $\cU$ the set of functions $u\in\bR^V$
 for which there exists a bijection $\sigma:\{1\cdots |V|\}\to V$ such that
 $u\circ\sigma=d$. Otherwise stated, $\cU$ is the set of all permutations of sequence $d$.
 We set:
 $$
 \lambda_0 = \max\{ \|u\|_*\, :\, u\in \cU\}\ .
 $$

 \begin{proposition}
 \label{prop:MC}
   For any $\lambda > \lambda_0$, the set of \modif{regularized minimizers of the (MC) problem} is equal to  $\median(x_0)1_V$.
 \end{proposition}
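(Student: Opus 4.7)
My plan is to prove the two set-inclusions separately, both via Theorem~\ref{the:min} applied to $F(x) = \sum_v |x(v) - x_0(v)|$, and to exploit the strict inequality $\lambda > \lambda_0$ in the converse direction.

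For the inclusion $\median(x_0) 1_V \subset \{\text{regularized minimizers}\}$, I would fix $x^\star \in \median(x_0)$ and exhibit an explicit element $u \in \partial F(x^\star 1_V) \cap \cU$. Recall that $\partial F(x^\star 1_V)$ consists of vectors with $u(v) = \sign(x^\star - x_0(v))$ off the tie set $T = \{v : x_0(v) = x^\star\}$ and $u(v) \in [-1, 1]$ on $T$. On $T$ I would place the remaining $\pm 1$'s (and a single $0$ when $|V|$ is odd) so that $u$ is a permutation of $d$. A short counting check, using that $n_- = |\{v : x_0(v) < x^\star\}|$ and $n_+ = |\{v : x_0(v) > x^\star\}|$ satisfy $\max(n_-, n_+) \leq \lfloor|V|/2\rfloor$ because $x^\star$ is a median (and that $|T| \geq 1$ in the odd case), confirms the required counts are nonnegative and sum to $|T|$. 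Since $u \in \cU$, we have $\|u\|_* \leq \lambda_0 < \lambda$, hence $u \in \lambda B_*$, and Theorem~\ref{the:min} yields that $x^\star 1_V$ is a regularized minimizer.

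For the converse, I would take any regularized minimizer $y^\star$, first show that $y^\star$ must be constant, and then conclude via part 4) of Theorem~\ref{the:min}. Fix $x^\star \in \median(x_0)$ and the witness $u$ just built; then $u \in \partial F(x^\star 1_V)$ and $-u \in \lambda B_* = \lambda\,\partial\|x^\star 1_V\|_\tv$. The standard fact that whenever a sum of two convex functions attains its minimum at a point with a specific subgradient pair $(u, -u)$, any other minimizer must saturate both underlying subgradient inequalities yields $u \in \partial F(y^\star)$ and $v := -u/\lambda \in \partial\|y^\star\|_\tv$. Since $\|v\|_* \leq \lambda_0/\lambda < 1$, the case $\|y^\star\|_\tv > 0$ is impossible: Proposition~\ref{prop:subgradient-norm} applied to $y^\star - \bar y^\star 1_V$ would force $\langle v, y^\star\rangle = \|y^\star\|_\tv$ (using that $v$ has zero mean), while the dual-norm inequality gives $\langle v, y^\star\rangle \leq \|v\|_* \|y^\star\|_\tv < \|y^\star\|_\tv$. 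Hence $y^\star = c\,1_V$, and part 4) of Theorem~\ref{the:min} then forces $c \in \median(x_0)$.

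The main obstacle I expect is the combinatorial construction of $u \in \cU$ in the forward direction, which requires a careful case split on the parity of $|V|$ and on the size of the tie set $T$. The rest is a routine application of convex duality; the strict inequality $\lambda > \lambda_0$ enters precisely to supply the strict bound $\|v\|_* < 1$ used in the uniqueness argument, which is also why the proposition requires strict inequality rather than $\lambda \geq \lambda_0$.
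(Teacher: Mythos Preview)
Your proposal is correct. The forward inclusion is essentially identical to the paper's argument: both build an element of $\cU\cap\partial F(x^\star 1_V)$ and invoke Theorem~\ref{the:min}; you are just more explicit about the tie set~$T$ and the counting.

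The converse, however, follows a genuinely different route. The paper argues directly at a hypothetical non-consensus minimizer $\bs x$: any $g\in\partial F(\bs x)$ with $-g\in\lambda\,\partial\|\bs x\|_\tv$ must satisfy $\|g\|_*=\lambda$, while $g$ lies in $B_0=\{u\in\bR_0^V:\|u\|_\infty\le 1\}$; the paper then needs the auxiliary Lemma~\ref{lem:extrem} (the extremal points of $B_0$ are exactly $\cU$) to conclude $\max_{B_0}\|\cdot\|_*=\lambda_0<\lambda$, a contradiction. You instead transport the \emph{specific} witness $u\in\cU$ from the forward step to any other minimizer $y^\star$ (the ``standard fact'' you cite is indeed valid: equality in the two subgradient inequalities at one minimizer forces the same pair $(u,-u)$ to be subgradients at every minimizer), and then $\|{-u}/\lambda\|_*\le\lambda_0/\lambda<1$ clashes with Proposition~\ref{prop:subgradient-norm} as soon as $\|y^\star\|_\tv>0$. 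Your argument is a bit more economical in that it bypasses the extremal-point analysis of $B_0$ entirely; the paper's argument, in exchange, is self-contained at the non-consensus point and makes visible the structural identity $\lambda_0=\max\{\|u\|_*:u\in B_0\}$, which has some independent interest. Both approaches use the strict inequality $\lambda>\lambda_0$ at the same place, namely to rule out $\|v\|_*=1$ in the subdifferential of $\|\cdot\|_\tv$ at a nonzero point.
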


\section{Proposed Algorithms}
\label{sec:algo}

\subsection{Subgradient Algorithm}

Note that function $\lambda\|x\|_{\tv}$ is non-differentiable.
Perhaps the most simple and natural approach is to use the subgradient algorithm
associated with problem~(\ref{eq:regTV}). This naturally yields the following distributed algorithm, where each node $v$ holds an estimate
$x_n(v)$ of the minimizer at time $n$ and combine it with the ones received from its neighbors.
\medskip

  \begin{breakbox}
    \noindent {\bf Algorithm~1}:
$$
x_{n+1}(v) = x_n(v) + \gamma_{n}\big[g_n(v) + \lambda \sum_{w\sim
  v}\sign\left(x_n(w)-x_n(v)\right)\big]
$$
\noindent where $g_n(v)\in -\partial f_v(x_n(v))$, typically $g_n(v) =
x_0(v)-x_n(v)$ in the (AC) case.
\end{breakbox}

Standard convex optimization arguments can be used to prove that function $x_n$ converges to a minimizer of~(\ref{eq:reg}) under the hypothesis of decreasing step size. The arguments being standard, the proof is omitted and we refer to \cite{bach2011optimization,boyd:lecture03} or references therein. A simple argument given in the appendix can be used to prove that the following property holds: $\forall n\geq 0, \bar x_n = \bar x_0$. The following theorem sums up the mentioned results.

\begin{assumption}
The following holds.
\begin{enumerate}
\item The step sizes satisfy $\gamma_n>0$ for all $n$, $\sum_n
  \gamma_n=+\infty$ and $\sum_n\gamma_n^2<\infty$.
\item There exists a constant $C>0$ such that for any $v$, any $x$ any $g\in \partial f_v(x)$,
 $|g|\leq C(1+|x|)$.
\end{enumerate}
\label{as:sousgrad}
\end{assumption}

\begin{theorem}
\label{the:CVsubg}
Consider that $R=V$ (all agents are regular).
Under Assumptions~\ref{as:f},~\ref{as:sousgrad}, sequence $x_n$ given by Algorithm~1 converges to the minimizers of~(\ref{eq:regTV}). Moreover, in the average consensus case (AC), the following property holds: $\forall n\geq 0, \bar x_n = \bar x_0$.
\end{theorem}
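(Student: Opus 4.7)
The plan is to view Algorithm~1 as the standard subgradient method applied to the convex objective $\Phi(x):=F(x)+\lambda\|x\|_{\tv}$ on $\bR^V$, and then invoke a classical convergence theorem for subgradient descent with diminishing step sizes.

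First, I would identify the bracketed increment as a (negative) subgradient of $\Phi$ at $x_n$. Since $F$ is separable, $(\partial F(x))_v=\partial f_v(x(v))$. For each edge $\{v,w\}\in E$, the function $x\mapsto|x(v)-x(w)|$ admits the subgradient whose $v$-component is $\sign(x(v)-x(w))$ (any value in $[-1,1]$ when the two coordinates coincide) and whose $w$-component is the opposite; summing over edges incident to $v$ shows that the vector with entries $\sum_{w\sim v}\sign(x(v)-x(w))$ belongs to $\partial\|x\|_{\tv}$. Hence
$$
s_n(v)\;:=\;-g_n(v)\,-\,\lambda\sum_{w\sim v}\sign(x_n(w)-x_n(v))\;\in\;\partial\Phi(x_n),
$$
so Algorithm~1 is exactly the subgradient iteration $x_{n+1}=x_n-\gamma_n s_n$.

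Second, I would invoke the classical convergence theorem for subgradient methods with square-summable non-summable step sizes (see, e.g., \cite{boyd:lecture03}). Its hypotheses hold here: convexity of $\Phi$ follows from Assumption~\ref{as:f}(a); the step-size conditions are Assumption~\ref{as:sousgrad}(1); and the affine growth of the subgradient follows from Assumption~\ref{as:sousgrad}(2) for the $F$ part, together with the uniform bound $\bigl|\sum_{w\sim v}\sign(\cdots)\bigr|\leq d(v)\leq\max_{v'}d(v')$ for the TV part, which together yield $\|s_n\|\leq C'(1+\|x_n\|)$ for some $C'>0$. Under these ingredients $x_n$ is known to converge to the set of minimizers of $\Phi$, i.e.\ to the set of minimizers of~(\ref{eq:regTV}).

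Third, for the mean-preservation property in the (AC) case, I would simply sum the update rule over $v\in V$. For the TV term, each edge $\{v,w\}$ contributes $\sign(x_n(w)-x_n(v))$ to the sum at vertex $v$ and its opposite at vertex $w$, so the total vanishes. For the data-fit term, $\sum_v g_n(v)=\sum_v(x_0(v)-x_n(v))=|V|(\bar x_0-\bar x_n)$. Hence $\bar x_{n+1}=\bar x_n+\gamma_n(\bar x_0-\bar x_n)$, and a trivial induction starting from the tautology $\bar x_0=\bar x_0$ yields $\bar x_n=\bar x_0$ for every $n\geq 0$.

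The main obstacle in a fully self-contained proof is the passage from the Fejér-type inequality $\|x_{n+1}-x^\star\|^2\leq\|x_n-x^\star\|^2-2\gamma_n(\Phi(x_n)-\Phi(x^\star))+\gamma_n^2\|s_n\|^2$ to convergence of the iterates themselves: one must first use $\sum_n\gamma_n^2<\infty$ together with the affine growth of $s_n$ to establish boundedness of $(x_n)$, then derive $\liminf_n\Phi(x_n)=\min\Phi$ from $\sum_n\gamma_n=\infty$, and finally extract a cluster point and upgrade it to a full limit via the quasi-Fejér monotonicity. As the authors explicitly indicate, this is textbook material and I would defer to the references they cite.
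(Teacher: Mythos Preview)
Your proposal is correct and matches the paper's approach: the paper likewise casts Algorithm~1 as the subgradient iteration $x_{n+1}=x_n-\gamma_n s_n$ with $s_n\in\partial\Phi(x_n)$ and then invokes a standard convergence result for subgradient descent under the step-size and affine-growth assumptions (stated and proved self-containedly as Proposition~\ref{prop:subg} in Appendix~\ref{app:subg}, via the Fej\'er inequality and the deterministic Robbins--Siegmund lemma, exactly the skeleton you sketch in your last paragraph). Your mean-preservation argument in the (AC) case---summing the update over $V$, cancelling the edge contributions pairwise, and closing by induction on $\bar x_{n+1}=\bar x_n+\gamma_n(\bar x_0-\bar x_n)$---is precisely the ``simple argument'' the paper alludes to.
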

Theorem~\ref{the:CVsubg} is an immediate consequence of Proposition~\ref{prop:subg} given in Appendix~\ref{app:subg}.
This theorem is also a consequence of well known results on the subgradient algorithm \cite{shor1985minimization}.
However, for the sake of completeness, we provide a self-contained proof in this paper.

\subsection{Alternating Direction of Method of Multipliers (ADMM)}
\label{sec:admm}

It is a widely acknowledged fact that the subgradient method is slow in terms of convergence rate (see~\cite[Chap. 3.2.3]{nesterov2004introductory}).
Many alternatives do exist in order to speed up the convergence \cite{combettes-pesquet-book1-2011,bach2011optimization}.
Among these solutions, we propose an approach which can be seen as a special case
of ADMM. We refer to \cite{Boyd2011,iutzeler13,schizas:2008,erseghe2011fast} for other examples of applications of ADMM to distributed optimization.


Let us denote by $(V,\overset{\rightleftharpoons}{E})$ the directed graph such that:
$(v,w)\in \overset{\rightleftharpoons}{E}$ iff $v\sim w$.
That is, each pair $\{v,w\}$ of connected nodes yields two edges in $\overset{\rightleftharpoons}{E}$ (one from $v$ to $w$, the other from $w$ to $v$).
Problem~(\ref{eq:regTV}) is equivalent to
$$
\min_{(x,z)}\ \sum_{v\in V} f_v(x(v)) +  \lambda\,\sum_{\modif{\{v,w\}\,:\,v\sim w}} |z(v,w)-z(w,v)|
$$
where the minimum is taken w.r.t. $(x,z)\in\bR^V\times \bR^{\overset{\rightleftharpoons}{E}} $ such that
$z(v,w) = x(w)$ for any $(v,w)\in \overset{\rightleftharpoons}{E}$.
The augmented Lagrangian writes:
\begin{multline*}
  \cL(x,z;\eta) = \sum_{v\in V} f_v(x(v)) +
  \,\sum_{\modif{\{v,w\}\,:\,v\sim w}} \lambda|z(v,w)-z(w,v)| \\+
  \sum_{(v,w)\in \overset{\rightleftharpoons}{E}} T_\rho\left(\eta(v,w),z(v,w)-x(w)\right)
\end{multline*}
where we set $T_\rho(\alpha,\beta) = \alpha\beta+  \frac\rho2\beta^2$.
The ADMM consists in generating three sequences $(x_n,z_n,\eta_n)_{n\geq 0}$ recursively defined by
\begin{eqnarray*}
  x_{n+1} &=& \arg\min_{x\in\bR^V}\cL(x,z_n;\eta_n) \\
  z_{n+1} &=& \arg\min_{z\in\bR^{\overset{\rightleftharpoons}{E}}}\cL(x_{n+1},z;\eta_n)\\
\eta_{n+1}(v,w) &=& \eta_{n}(v,w)+\rho\left(z_{n+1}(v,w)-x_{n+1}(w)\right)
\end{eqnarray*}
for all $(v,w)\in\overset{\rightleftharpoons}{E}$.
\modif{In Appendix~\ref{app:admm}, we make ADMM explicit and prove that the update equation in $x_n$ is given by~{Algorithm~2} below.}

Denote by $\text{proj}_{[-\omega,\omega]}(x)$ the projection of $x$ onto $[-\omega,\omega]$
and by $\prox_{f,\rho}(x)=\arg\min_y f(y)+\frac \rho 2(y-x)^2$ the proximal operator associated with a real function $f$.
\medskip

\clearpage

\begin{breakbox}
  \noindent {\bf Algorithm~2}:

\noindent \modif{Each agent $v$ maintains the variables $x_n(v)$, $\bs\mu_n(v) =(\mu_n(w,v))_{w:w\sim v}$ for $n=0,1,\dots$.}

\noindent \modif{At time $n$, each agent $v$ sends $x_n(v)$ to  her/his neighborhood. Variable $\bs \mu_n(v)$ is kept private.}

  \noindent At  time $n$, each agent $v\in R$ receives $(x_n(w) : w\sim
  v)$ and makes the following updates:
$$
\mu_{n+1}(w,v) =
\text{proj}_{[-2\lambda/\rho,2\lambda/\rho]}\left(\mu_{n}(w,v)+x_{n}(w)-x_{n}(v)\right)
$$
for any $w\sim v$ in its neighborhood. Next,
$$
x_{n+1}(v) =
\prox_{f_v,\,\rho d(v)}\left(x_{n}(v)+\textstyle\frac 32
  \,\tilde\mu_{n+1}(v) - \frac 12\,\tilde\mu_{n}(v) \right)
$$
where we set $\tilde \mu_n(v) = \frac 1{d(v)}\sum_{w\sim
  v}\mu_n(w,v)$\,.
\end{breakbox}


Here, each regular agent $v$ not only maintains an estimate $x_n(v)$ of the minimizer, but also
holds in its memory one scalar $\mu_n(w,v)$ for any of its neighbors $w\sim v$.
We stress the fact that the values $\mu_n(w,v)$ are purely private in the sense that they are not exchanged by agents.
\modif{Otherwise stated, at time $n$,  an agent $v$ only shares her/his estimates $x_{n}(v)$ with her/his neighbors, the variable $\bs\mu_{n}(v)$ being private}.
In the (AC) case, operator $\prox_{f_v,\rho}$ has a simple expression.
In that case, the update equation in $x_n$ simplifies to:
$$
x_{n+1}(v) = \frac{x_0(v)+\rho d(v)\left(x_{n}(v)+\textstyle\frac 32 \,\tilde\mu_{n+1}(v)  - \frac 12\,\tilde\mu_{n}(v) \right)}{1+\rho d(v)}\,.
$$
As {Algorithm~2} can be seen as a special case of a standard ADMM, the following result follows directly from~\cite{combettes-pesquet-book1-2011}.
\begin{theorem}
\label{the:CVadmm}
  Assume that $R=V$ (all agents are regular). Under Assumption~\ref{as:f}, sequence $x_n$ given by Algorithm~2 converges to the minimizers of~(\ref{eq:regTV}). Moreover, in the average consensus case (AC), if the graph $G$ is assumed $d$-regular, {\sl i.e.} each node has the same degree $d$, the average is preserved over the network: $\forall n\geq 0, \bar x_n = \bar x_0$.
\end{theorem}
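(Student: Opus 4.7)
The theorem splits into two independent claims: (i) convergence of $(x_n)$ to a minimizer of~(\ref{eq:regTV}), and (ii) preservation of the network average in the (AC) case when $G$ is $d$-regular. I would handle them separately.

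For (i), the idea is to invoke a standard ADMM convergence theorem. As derived in Appendix~\ref{app:admm}, Algorithm~2 is precisely the ADMM iteration applied to the splitting reformulation
\[
\min_{(x,z)} \ \sum_{v\in V} f_v(x(v)) + \lambda \sum_{\{v,w\}:\,v\sim w} |z(v,w)-z(w,v)| \quad \text{s.t.} \quad z(v,w)=x(w),\ \forall (v,w)\in\overset{\rightleftharpoons}{E}.
\]
Under Assumption~\ref{as:f} the two objective blocks are proper, convex and lower semi-continuous, and the linear constraint is feasible (take any minimizer $x^\star$ of~(\ref{eq:pb}) and set $z(v,w)=x^\star$). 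These are exactly the hypotheses of the standard ADMM convergence result (\cite{combettes-pesquet-book1-2011}), which yields convergence of $(x_n)$ to a primal minimizer of the reformulated problem, and thus of~(\ref{eq:regTV}).

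For (ii), I would start from the closed-form AC update displayed just above the theorem,
\[
x_{n+1}(v) = \frac{x_0(v)+\rho d \bigl(x_n(v) + \tfrac{3}{2}\tilde\mu_{n+1}(v) - \tfrac{1}{2}\tilde\mu_n(v)\bigr)}{1+\rho d}.
\]
Averaging over $v\in V$ in the $d$-regular graph gives
\[
\bar x_{n+1} = \frac{\bar x_0 + \rho d \bigl(\bar x_n + \tfrac{3}{2}\overline{\tilde\mu_{n+1}} - \tfrac{1}{2}\overline{\tilde\mu_n}\bigr)}{1+\rho d},
\]
so it suffices to establish $\sum_v \tilde\mu_n(v)=0$ for every $n$; a one-line induction then yields $\bar x_n=\bar x_0$.

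The crux of the second part, and in my view the only non-routine step, is the following \emph{antisymmetry} property: with the standard initialization $\mu_0\equiv 0$, one has $\mu_n(v,w)=-\mu_n(w,v)$ for every directed edge and every $n\ge 0$. I would prove it by induction on $n$. The base case is trivial. For the inductive step, swap the roles of $v$ and $w$ in the $\mu$-update; by the induction hypothesis the argument of the projection becomes the negative of its counterpart, and since $\proj_{[-2\lambda/\rho,2\lambda/\rho]}$ is an odd function of its argument, the output is also negated. Summing and using $d$-regularity,
\[
\sum_{v\in V}\tilde\mu_n(v) = \frac{1}{d}\sum_{v\in V}\sum_{w\sim v}\mu_n(w,v) = \frac{1}{d}\sum_{(w,v)\in\overset{\rightleftharpoons}{E}}\mu_n(w,v) = 0,
\]
since each unordered pair $\{v,w\}$ contributes $\mu_n(w,v)+\mu_n(v,w)=0$. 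The only delicate point is exploiting the symmetry of the projection interval to preserve antisymmetry under the update; once this is noted, the rest is linearity of the $x$-update and the $d$-regularity assumption.
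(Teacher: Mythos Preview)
Your proposal is correct. For part~(i) it is identical to the paper's: both simply invoke the standard ADMM convergence theorem from~\cite{combettes-pesquet-book1-2011} after noting that Algorithm~2 is the ADMM applied to the stated splitting.

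For part~(ii) your route differs from the paper's. The paper does not work with the reduced variables $(x_n,\mu_n)$ of Algorithm~2; instead it goes back to the raw ADMM variables $(x_n,z_n,\eta_n)$, writes the first-order optimality condition for the $x$-minimization step, and sums it over $v$. The cancellations it uses are the identities $\eta_n(v,w)=-\eta_n(w,v)$ and $z_n(v,w)+z_n(w,v)=x_n(v)+x_n(w)$, which were already obtained as byproducts of the derivation in Appendix~\ref{app:admm} (from equation~(\ref{eq:sumZ}) and the dual update), independently of any symmetric initialization. You instead stay at the level of Algorithm~2 and prove the antisymmetry $\mu_n(v,w)=-\mu_n(w,v)$ directly by induction, exploiting that $\proj_{[-2\lambda/\rho,2\lambda/\rho]}$ is odd. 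Your argument is arguably cleaner and more self-contained (it does not require revisiting the ADMM internals), but it needs the extra hypothesis that $\mu_0$ is initialized antisymmetrically (e.g.\ $\mu_0\equiv 0$); the paper's structural identities hold from the ADMM mechanics themselves after the first iteration. Either way, once the antisymmetry is in hand, the final induction $\bar x_{n+1}=(\bar x_0+\rho d\,\bar x_n)/(1+\rho d)$ is the same.
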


\section{Stubborn Agents}
\label{sec:stubborn}

One of the claims of this paper is that the above algorithms are attractive in order to provide robustness against misbehaving agents. This claim is motivated by the example below. Assume that some agents, called {\sl stubborn}, never change their state. The rationale behind this model is twofold: either these agents are malfunctioning, or they might want to deliberately pollute or influence the network.

For ease of interpretation, we will focus on the average consensus case.
We represent the state vector as $x_n = (x_n^R,x_n^S)$ where $x_n^R$ (resp. $x_n^S$) is the restriction of $x_n$ to regular agents (resp. stubborn agents). By definition, $x_n^S=x_0^S$ for any $n$.

\subsection{\modif{Failure of }Linear Gossip}

Assume that the state vector $ x_n$ is written in bloc form $ x_n = \begin{pmatrix} x^R_n\\  x^S\end{pmatrix}$.  The state vector is updated according to standard linear gossip scheme $ x_{n+1} =  W  x_n$ where $W$ is a square matrix.
We make the following natural assumptions about the matrix $W$.

\begin{assumption}[Linear Gossip Structure]
  \label{assum:stubgoss_rand}
  \mbox{}\\
  (a)  Matrix $ W$ has the following structure;
  \begin{equation}
       W = \begin{pmatrix}
         W^R &  W^S\\
         0 &  I
      \end{pmatrix}
    \end{equation} \\
   (b) Matrix $ W$ is a right stochastic matrix: its entries are in $[0,1]$ and   $ W  1_V =  1_V$. \\
  (c) Considering the directed edge
    structure $E'$ defined by: $(v,w)\in E'\Leftrightarrow
    W(v,w)>0$, there exists a directed path from each regular node $r$ to at least
    one stubborn node $s$.
  \end{assumption}

The bloc structure of matrix $ W$ follows from the constraint that {\sl stubborn} agents do not change their state over time. The last requirement is also very natural. If it were to be not fulfilled, there would exist regular agents that communicate in autarky and cannot be aware of {\sl stubborn} agents' opinions.

We now address the issue of convergence of such gossip algorithms in the
presence of {\sl stubborn} agents. The following result is proved in Appendix~\ref{app:deter-cvg}.

\begin{proposition}\label{prop:deterg-cvg}
  Under  Assumption~\ref{assum:stubgoss_rand}, algorithm $ x_{n+1} =  W x_n$ converges to
  \begin{equation}
 x_\infty = (I -  W^R)^{-1} W^S x^S
\end{equation}
\end{proposition}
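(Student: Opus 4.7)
The plan is to exploit the block-triangular structure of $W$ to reduce everything to showing that the ``regular-regular'' block $W^R$ is a contraction in the sense that $(W^R)^n \to 0$.

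First, I would observe by induction on $n$ that
\[
W^n \;=\; \begin{pmatrix} (W^R)^n & A_n \\ 0 & I \end{pmatrix}, \qquad A_n = \sum_{k=0}^{n-1} (W^R)^k W^S,
\]
so that $x_n = W^n x_0 = \bigl((W^R)^n x_0^R + A_n x^S,\; x^S\bigr)$. Thus the statement reduces to proving (i) $(W^R)^n \to 0$ and (ii) $A_n \to (I-W^R)^{-1} W^S$. Once (i) is established, $\rho(W^R)<1$ follows, the matrix $I-W^R$ is invertible, and the Neumann series identity $\sum_{k=0}^{\infty}(W^R)^k = (I-W^R)^{-1}$ immediately yields (ii).

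The heart of the argument is therefore showing $(W^R)^n\to 0$. I would work with the $\ell_\infty$ operator norm, i.e.\ the maximum absolute row sum. Since $W$ is right-stochastic and has the stated block form, each row $r\in R$ of $W^n$ still sums to $1$, which gives
\[
\sum_{w\in R}(W^R)^n(r,w) \;=\; 1 - \sum_{s\in S} A_n(r,s).
\]
All entries of $(W^R)^k W^S$ are non-negative, so the sequence of row sums of $(W^R)^n$ is non-increasing in $n$ (one can also verify this directly: $\sum_w (W^R)^{n+1}(r,w) = \sum_{w'}(W^R)^n(r,w')\bigl(1 - \sum_{s\in S} W^S(w',s)\bigr)$). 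Assumption~\ref{assum:stubgoss_rand}(c) provides, for each regular $r$, a directed path $r = v_0 \to v_1 \to \cdots \to v_{L_r-1} \to s \in S$ in $E'$, so that $(W^R)^{L_r-1} W^S(r,s) > 0$. Hence $A_{L_r}(r,s) > 0$ and the $r$-th row sum of $(W^R)^{L_r}$ is strictly less than $1$; by monotonicity it remains strictly less than $1$ for all larger indices. Taking $N = \max_{r\in R} L_r$ then gives $\alpha := \|(W^R)^N\|_\infty < 1$, whence $\|(W^R)^{kN}\|_\infty \le \alpha^k \to 0$, and by interleaving with at most $N-1$ extra applications of the (sub-stochastic) matrix $W^R$ we conclude $(W^R)^n \to 0$.

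With (i) in hand, (ii) is standard: $A_n = \sum_{k=0}^{n-1}(W^R)^k W^S \to (I-W^R)^{-1} W^S$, and combining with $x_n^S = x^S$ yields the claimed limit. The main obstacle is the passage from the per-row reachability hypothesis to a uniform bound $\|(W^R)^N\|_\infty < 1$; the monotonicity of the row sums, a direct consequence of sub-stochasticity of $W^R$, is what makes this bookkeeping clean.
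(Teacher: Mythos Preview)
Your proof is correct and shares the paper's overall plan: reduce to the recursion $x_n^R = (W^R)^n x_0^R + \sum_{k=0}^{n-1}(W^R)^k W^S x^S$ and then show $(W^R)^n\to 0$. The difference lies in how that key step is carried out. The paper argues spectrally: it proves (Lemma~\ref{lem:invertibility}) that $zI-W^R$ is invertible for every $|z|\ge 1$ by taking a putative eigenvector, looking at a coordinate of maximal modulus, and using row-stochasticity together with the reachability assumption to force all neighbors of that coordinate (and then its whole connected component) to lie in $R$, contradicting~(c). You instead give a direct sub-stochastic/combinatorial argument in the $\ell_\infty$ operator norm, using the paths in~(c) to make each row sum of $(W^R)^N$ strictly below $1$ for a common $N$. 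Both arguments exploit the same structural facts (non-negativity, row-stochasticity of $W$, and reachability of $S$); the paper's version is slightly slicker in that it yields $\rho(W^R)<1$ in one stroke, while your version is more elementary---it avoids eigenvalue reasoning altogether and gives an explicit contraction bound $\|(W^R)^N\|_\infty=\alpha<1$, which even quantifies the convergence rate. One small point worth making explicit in your write-up is that the intermediate vertices $v_0,\dots,v_{L_r-1}$ of the path may be taken in $R$; this is automatic from the block structure (a stubborn node has only a self-loop in $E'$), but stating it removes any ambiguity in the estimate $(W^R)^{L_r-1}W^S(r,s)>0$.
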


Consensus is not necessarily reached since $ x_\infty$ is not proportional to $ 1_V$ as long as {\sl stubborn} agents disagree with each other. In addition, the limit state does not depend on the initial state, it only depends on the {\sl stubborn} agents state. In other terms, the {\sl stubborn} agents solely drive the network, initial opinions of regular agents is lost, even with one single {\sl stubborn} agent.

\subsection{ADMM and subgradient algorithm}


 As a straightforward extension of Theorems  \ref{the:CVsubg} and \ref{the:CVadmm}, it is not difficult to see that in the presence of stubborn agents, the sequence $x_n^R$ generated either by Algorithms~1 or~2 converges to the minimizers of the following {\sl perturbed} optimization problem
\begin{equation}
 \min_{ x\in \bR^R} \frac{1}{2} \| x - x^{R}_0 \|_2^2 +
\lambda\, \|  x\|_{\tv} +  \lambda \sum_{\substack{v \in R, w \in S\\ v\sim w}}
 |{ x(v) -  x_0(w)}|  \,.
\label{eq:13}
\end{equation}
where, here, $\|x\|_{\tv}$ is to be understood as the total variation of a function $x\in\bR^R$ on the subgraph $G(R)$ {\sl i.e.}, the restriction of $G$ to the set of regular agents. To ease the reading and with no risk of ambiguity, we still keep the same notations $\, \|  \,.\,\|_{\tv}$ and $\, \|  \,.\,\|_{*}$ to designate the TV and the dual norms associated $G(R)$.

\modif{A complete proof of robustness of our algorithm would require a closed-form expression of the minimizers of~(\ref{eq:13}). Unfortunately, such an explicit characterization is a difficult task. In the general case,
 solving analytically problem~(\ref{eq:13}) seems unfortunately out of reach.
Therefore, in order to validate the claim that our algorithms are indeed robust, we are left with two options. First, we should provide extensive numerical results that exhibits the robustness in practical scenarios. This is done in Section~\ref{sec:simu}. Second, we must prove robustness {\sl in some case study} for which the minimizers of (\ref{eq:13}) are tractable.
In the sequel, we characterize the minimizers in the following simplified scenario.}
\begin{scenario}
  Any stubborn agent is connected to all regular agents. In addition, there exists $a\in \bR$ such that $x_0(s)=a$ for any $s\in S$.
\label{hyp:stub}
\end{scenario}
Loosely speaking, one might think of Scenario~\ref{hyp:stub} as a worst-case situation in the sense that each stubborn agent directly disturbs {\sl all} regular agents. 

Let $\overline {x}_0^{R} = \frac 1{|R|}\sum_{v\in R} x_0^{R}(v)$. The following result is proved in Appendix~\ref{app:robTVGA}.
\begin{theorem}
\label{the:robTVGA}
  Assume that $\lambda \geq  \|x_0^{R} - \overline {x}_0^{R} 1_R \|_*$ and
  let
  \[
 x^* = \left\{
  \begin{array}[h]{ll}
  a  & \text{if } |\overline { x}_0^{R}  - a | \leq  \lambda   |S|   \\
    \overline { x}_0^{R}+ \lambda |S|& \text{if } \overline { x}_0^{R} + \lambda |S|  < a \\
    \overline {x}_0^{R}- \lambda |S|& \text{if } \overline {x}_0^{R} - \lambda |S|> a\,
 \end{array}\right.\,.
  \]
Then, in Scenario~\ref{hyp:stub}, $x^* 1_R$ is the unique minimizer of Problem~(\ref{eq:13}).
\end{theorem}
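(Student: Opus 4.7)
The plan is to exploit the very simple structure of Scenario~\ref{hyp:stub} to reduce the coupling term, and then verify the first-order optimality condition $0\in\partial J(x^\star 1_R)$ directly, where $J$ denotes the objective of (\ref{eq:13}). Since every $w\in S$ is adjacent to every $v\in R$ and $x_0(w)=a$, the stubborn coupling collapses to
\[
\lambda\sum_{\substack{v\in R,\,w\in S\\ v\sim w}}|x(v)-x_0(w)|=\lambda|S|\sum_{v\in R}|x(v)-a|\,.
\]
The quadratic fidelity term makes $J$ strictly convex on $\bR^R$, so a minimizer exists (coercivity) and is unique; it thus suffices to exhibit a point $x^\star 1_R$ at which $0$ lies in the sum of the three subdifferentials.

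At any constant function $c\,1_R$ I read off the three subdifferentials: the quadratic term contributes the singleton $\{c\,1_R-x_0^R\}$; the TV term contributes $\lambda B_*$ (the ball in $\bR_0^R$ of the dual norm on the regular subgraph $G(R)$, via Proposition~\ref{prop:subgradient-norm} since $\grad(c\,1_R)=0$); and the stubborn penalty contributes $\lambda|S|\,\sign(c-a)\,1_R$ when $c\ne a$ and $\lambda|S|\,[-1,1]^R$ when $c=a$. The optimality condition becomes: find $\xi\in\lambda B_*$ (hence $\langle\xi,1_R\rangle=0$) and $\eta$ in the third subdifferential with
\[
\xi+\eta=x_0^R-c\,1_R\,.
\]
In each of the three cases of the statement I construct such a pair. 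If $c=a$ (middle case), set $\xi:=x_0^R-\overline x_0^R 1_R$, which lies in $\lambda B_*$ by the standing assumption $\lambda\ge\|x_0^R-\overline x_0^R 1_R\|_*$; this forces $\eta=(\overline x_0^R-a)\,1_R$, which lies in $\lambda|S|[-1,1]^R$ precisely because $|\overline x_0^R-a|\le\lambda|S|$. If $c=\overline x_0^R-\lambda|S|>a$, the third subdifferential is the singleton $\lambda|S|\,1_R$, so $\eta$ is forced and one recovers $\xi=x_0^R-\overline x_0^R 1_R\in\lambda B_*$; observe that the zero-mean requirement on $\xi$ is what pins $c$ to $\overline x_0^R-\lambda|S|$. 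The case $c=\overline x_0^R+\lambda|S|<a$ is entirely symmetric.

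I expect the only genuinely delicate step to be the middle case $c=a$, where the stubborn subdifferential is a full product of intervals rather than a singleton; one must find a constant $\eta$ that fits inside this box after the TV subgradient $\xi$ has absorbed the fluctuations of $x_0^R$ around its mean. The bound $|\overline x_0^R-a|\le\lambda|S|$ is precisely what makes this possible. Once the three sub-cases are verified, strict convexity of $J$ upgrades the optimality condition to uniqueness, completing the proof.
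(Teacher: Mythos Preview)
Your proof is correct and follows essentially the same route as the paper's: in both cases one verifies, at the candidate constant $x^\star 1_R$, that $x_0^R-\overline x_0^R\,1_R$ lies in $\lambda B_*$ (which is the standing hypothesis) once the stubborn penalty has been used to absorb the scalar offset; the paper phrases this as an application of Theorem~\ref{the:min} with $f_v(x)=\tfrac12(x-x_0(v))^2+\lambda|S|\,|x-a|$, while you decompose the subdifferential into three separate pieces, but the algebra and the constructed subgradients coincide in all three cases. Uniqueness by strict convexity is handled identically.
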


Observe that even if the common value $a$ of the {\sl stubborn} coalition  is very far from $\overline {x}_0^{R}$, we will reach a consensus   within a distance $\lambda |S|$ from $\overline {x}_0^{R}$. The same conclusion holds when  $a$ is already close to $\overline {x}_0^{R}$ (we still  reach a consensus within a distance $\lambda |S|$ from $\overline {x}_0^{R}$). The quantity $\lambda |S|$ can be interpreted as the robustness level of our algorithms in the sense of a maximum error margin. Therefore the proposed algorithm is unlike more standard gossip algorithms which can be driven arbitrarily far away from the sought consensus.
Note that a small $\lambda$ reduces the error margin. The tradeoff is related to the fact that the selection of a small $\lambda$ also reduces the set of functions $x_0^{R}$ satisfying the regularity condition $\|x_0^{R} - \overline {x}_0^{R} 1_R \|_*\leq \lambda$.

\section{Numerical Experiments}
\label{sec:simu}

In this section, the previous results are illustrated numerically. We first validate that, in a stubborn-free network, both Algorithm~1 and Algorithm~2 do converge to some $x^*$, minimizer of \eqref{eq:regTV}. Since eq.~\eqref{eq:regTV} involves abstract functions $f_v$, we consider two choices, namely $f_v(x) = (x-x_0(v))^2$ and $f_v(x)=|x-x_0(v)|$. Notice that in the average consensus case there is a single minimizer since the energy is strictly convex. We are able to predict the minimizers of~\eqref{eq:regTV} by computing $\|\partial F(x^*1_V)\|_*$ for $x^*=\bar x_0$ in the case $f_v(x) = (x-x_0(v))^2$ and $x^* \in \median x_0$ in the case $f_v(x) = \|x-x_0(v)\|$, using Algorithm~0 and then invoking Theorem~\ref{the:min}. We then introduce stubborn agents and validate Theorem~\ref{the:robTVGA} numerically checking that whenever $\lambda\geq\|x_0^R-\bar x_0^R\|_*$, regular agents do achieve consensus, settling the case of perturbed average consensus.

\subsection{Framework}

For the sake of simplicity we do not vary the underlying network in these experiments. The underlying network is the complete graph with $N=99$ agents (to have a unique median). In order to represent the data over the network, such as $v\in V\mapsto x(v)$, we choose an arbitrary order among the vertices, so that we can identify set $V$ with $\{1,\dots,N\}$. We then plot function $v\in\{1,\dots,N\}\mapsto x(v)$. Again for the sake of simplicity, whenever set $S$ is not empty -- {\sl i.e.} when there are some stubborn agents in the network -- we always assume there is a single stubborn agent indexed by $1$, namely: $S=\{1\}$.

The initial data is represented by circles in Figure~\ref{fig:initial_datum}. The average value of regular agents is approximately $\bar x_0 = 0.1271$ and $0.2817$ is a median.  In the case where there is a stubborn, the initial data is the same except for the first agent that is supposed stubborn.  Three possible values will be considered for the stubborn agent ($x_0(1) = 10$, $-10$ or $0.16$).
\begin{figure}[ht!]
  \[
  \begin{array}{cc}
	\includegraphics[width=.5\linewidth]{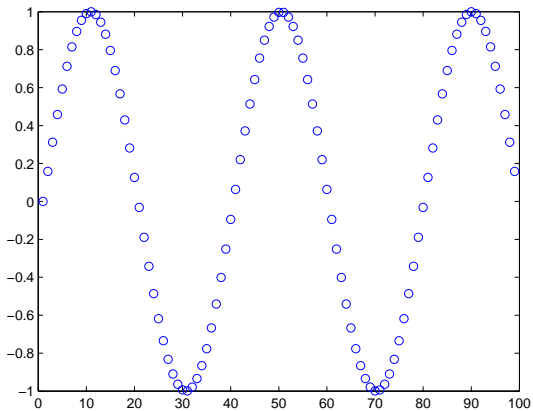}
  \end{array}
  \]
  \caption{Initial data (circles) as a function of agent index.
}
  \label{fig:initial_datum}
\end{figure}

\subsection{Convergence to the minimizers of eq.~\eqref{eq:regTV}}

We consider two cases: the average consensus where $f_v(x) = (x-x_0(v))^2$ and the median consensus problem $f_v(x) = |x-x_0(v)|$. In the average consensus case, the energy to minimize is strictly convex and admits a unique minimizer. In the median consensus case, convexity is not strict, and uniqueness does not necessarily hold.

For both cases, we provide two plots. The first plot shows function $n\mapsto \log\|J^\perp x_n\|$, where operator $J^\perp = I_N - J$, with $I_N$ the $n\times n$ identity matrix and $J$ the orthogonal projector on the consensus subspace (hence $J^\perp$ is the orthogonal projector on $1^\perp$). In other terms $\|J^\perp x_n\|$ measures how much disagreement there is left in the network. Please note the $\log$ in front, meaning that if such a function decrease linearly, it implies that $x_n$ goes to consensus exponentially fast. The second plot shows functions $n\mapsto \bar x_n$ -- except in the average consensus case -- since the graph is $N-1$-regular, $\bar x_n$ is kept constant along the iterations when using both Algorithms 1 and 2.

\subsubsection{Average Consensus}

In the regular case, we compute the target $\bar x_0 = 0.1271$. Next, we compute $\|x_0 - \bar x_0 1_v\|_* = 0.0130$ using Algorithm 0. We set $\lambda = 0.05 > \|x_0 - \bar x_0 1_v\|_*$. Figure~\ref{fig:algocompregavg} (left) shows function $n\mapsto\log\|J^\perp x_n\|$ for both Algorithms 1 and 2. Our results predict and the simulations confirm that $n\mapsto\bar x_n$ remain constant with $n$. It appears that Algorithm~2 goes to consensus exponentially fast.

\begin{figure}[ht!]
  \[
  \begin{array}{cc}
  \includegraphics[width=.5\linewidth]{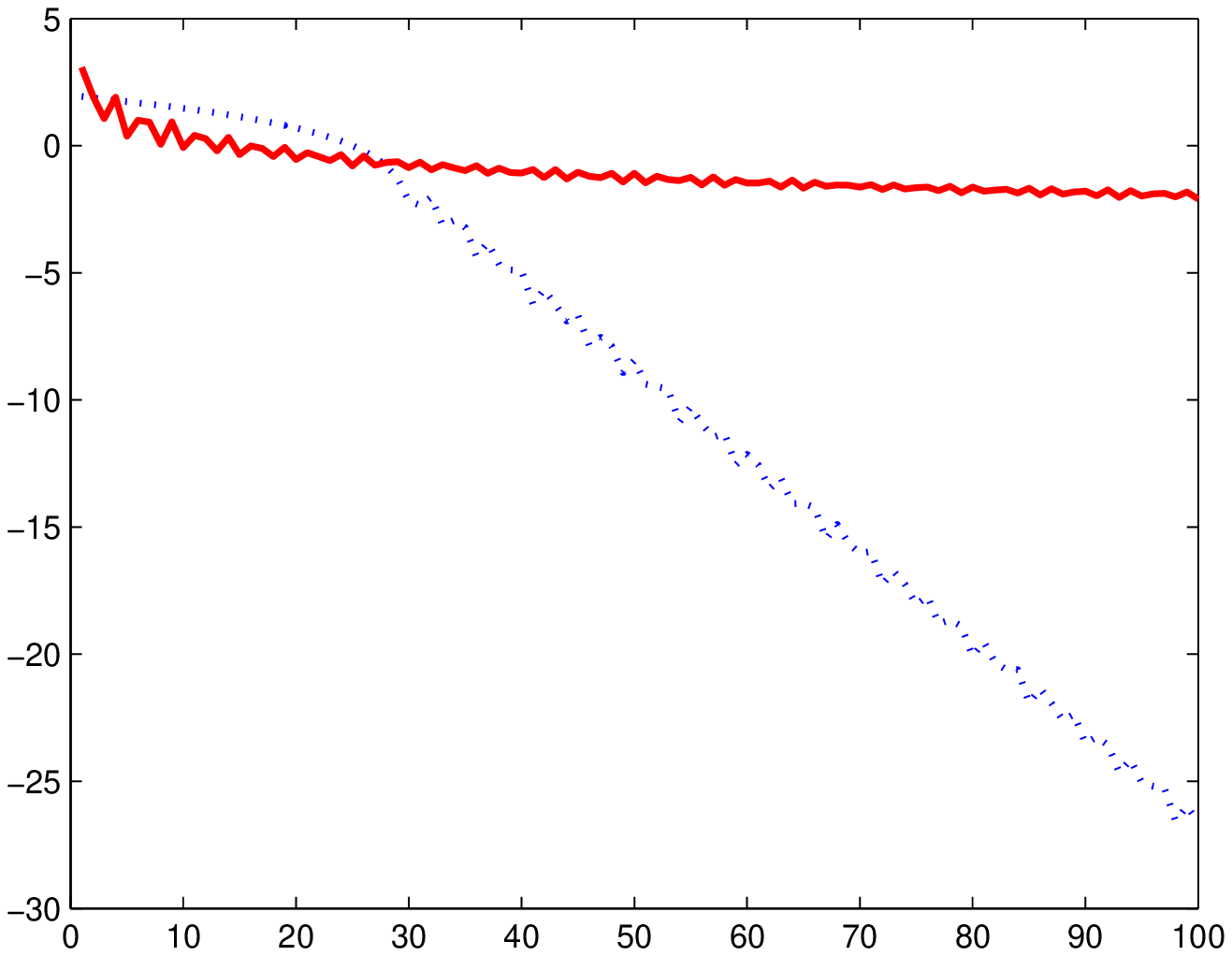}&
  \includegraphics[width=.5\linewidth]{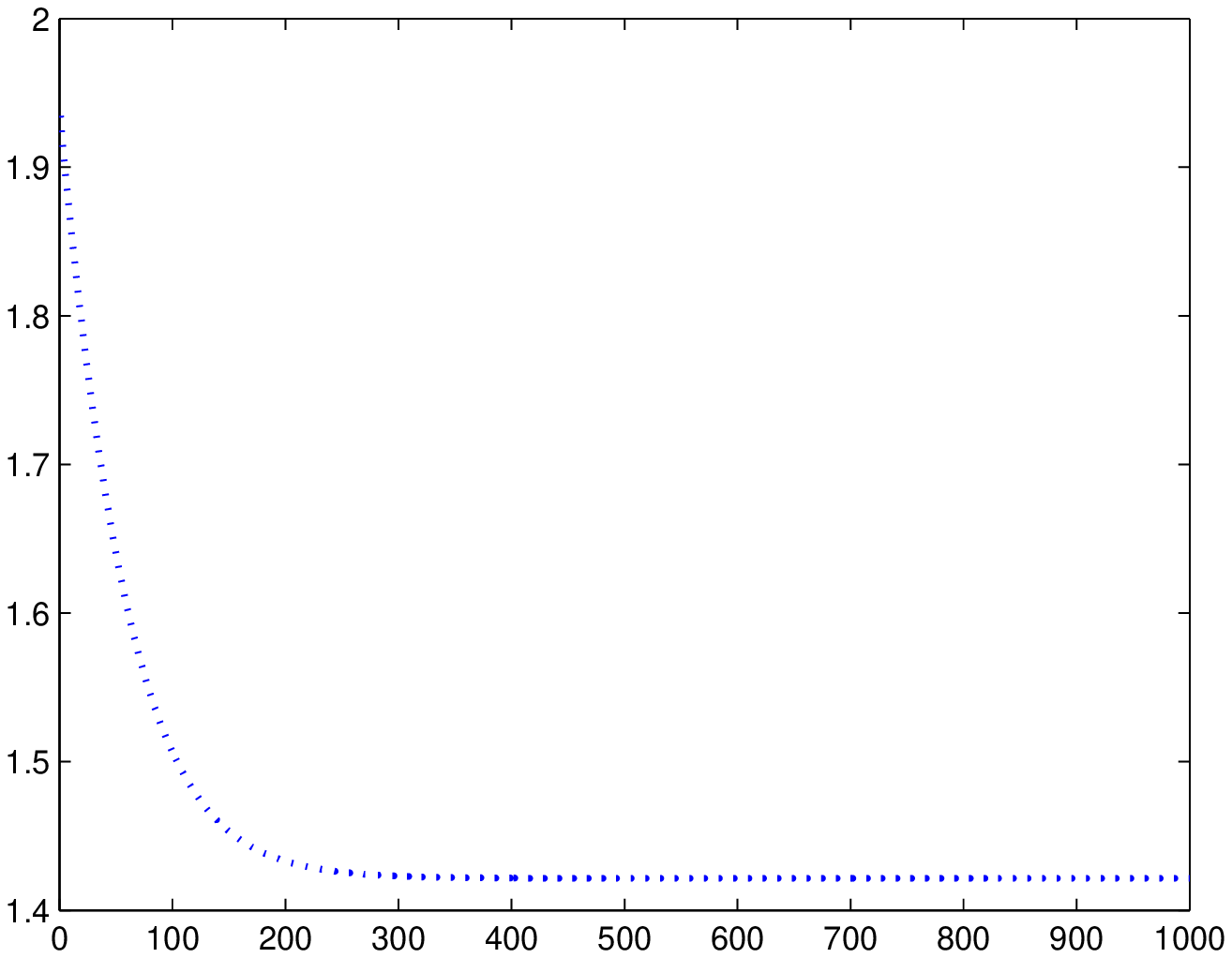}\\
  \end{array}
  \]
  \caption{Average consensus. Left: disagreement measure $n\to\log\|J^\perp x_n\|$ for Algorithms 1 and 2. Function $n\to\bar x_n$ is not represented since it is kept constant along the iterations. Note how faster Algorithm~2 converges to consensus. Right: Case where $\lambda < \|x_0 - \bar x_0 1_V\|_*$; consensus is not achieved and the quantity of disagreement, measured by $\log\|J^\perp x_n\|$ stabilizes after approximately $200$ iterations.}
  \label{fig:algocompregavg}
\end{figure}

On the opposite, we compare with the situation where $\lambda$ is subcritical, {\sl i.e.} less than $\|x_0 - \bar x_0 1_v\|_* = 0.0130$. We set $\lambda = 0.005$. Figure~\ref{fig:algocompregavg} (right) represents the curve $n\to\log\|J^\perp x_n\|$ for Algorithm~2 which appears the fastest. One can observe that consensus is not achieved, since $\log\|J^\perp x_n\|$ stays approximately constant after $200$ iterations instead of going down in a linear fashion as in the supercritical case, {\sl i.e.} when $\lambda > \|x_0 - \bar x_0 1_v\|_*$.

\subsubsection{Median Consensus}

In the regular case, we compute the target $\median x_0 = 0.2817$. For any non-empty subset $A \subset V$ in the complete graph it is easy to see that $\per(A )\geq N-1$. Hence, using Proposition~\ref{prop:meyer-perim} it is easy to see that $\lambda_0 \leq N/(2N-2)$. In order to satisfy $\lambda > \lambda_0$ we choose $\lambda=.5$.

Figure \ref{fig:algocompregmed} shows the two curves $n\mapsto\log\|J^\perp x_n\|$ and $n\mapsto\bar x_n$ for both Algorithms 1 and 2. One can notice that Algorithm~1 has not converged after $350$ iterations while Algorithm~2 has. One can also notice, comparing the slope breaks between the left and right figures that Algorithm~2 goes to consensus fast and then evolves at consensus towards the target value.

\begin{figure}[ht!]
  \[
  \begin{array}{cc}
  \includegraphics[width=.5\linewidth]{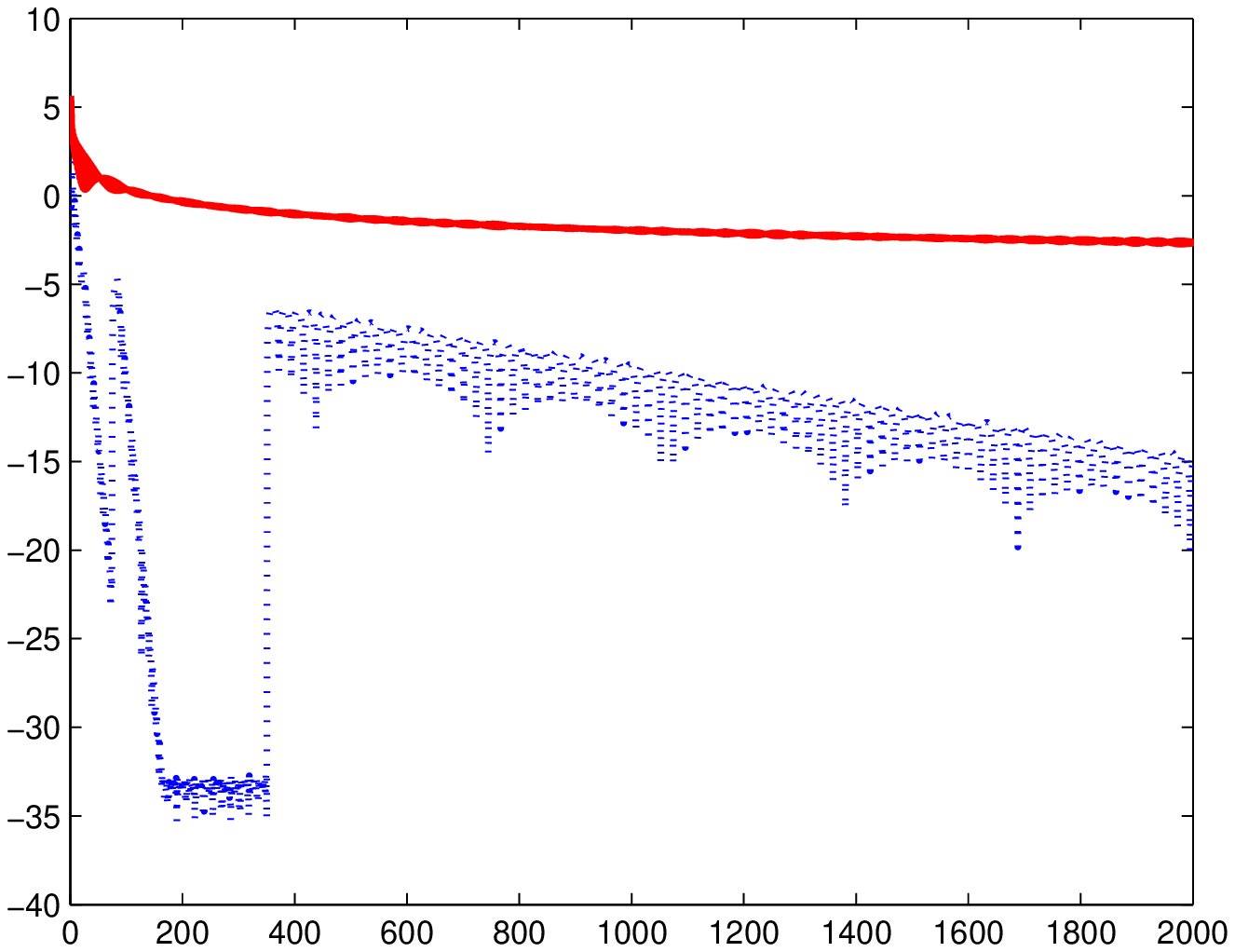}&
  \includegraphics[width=.5\linewidth]{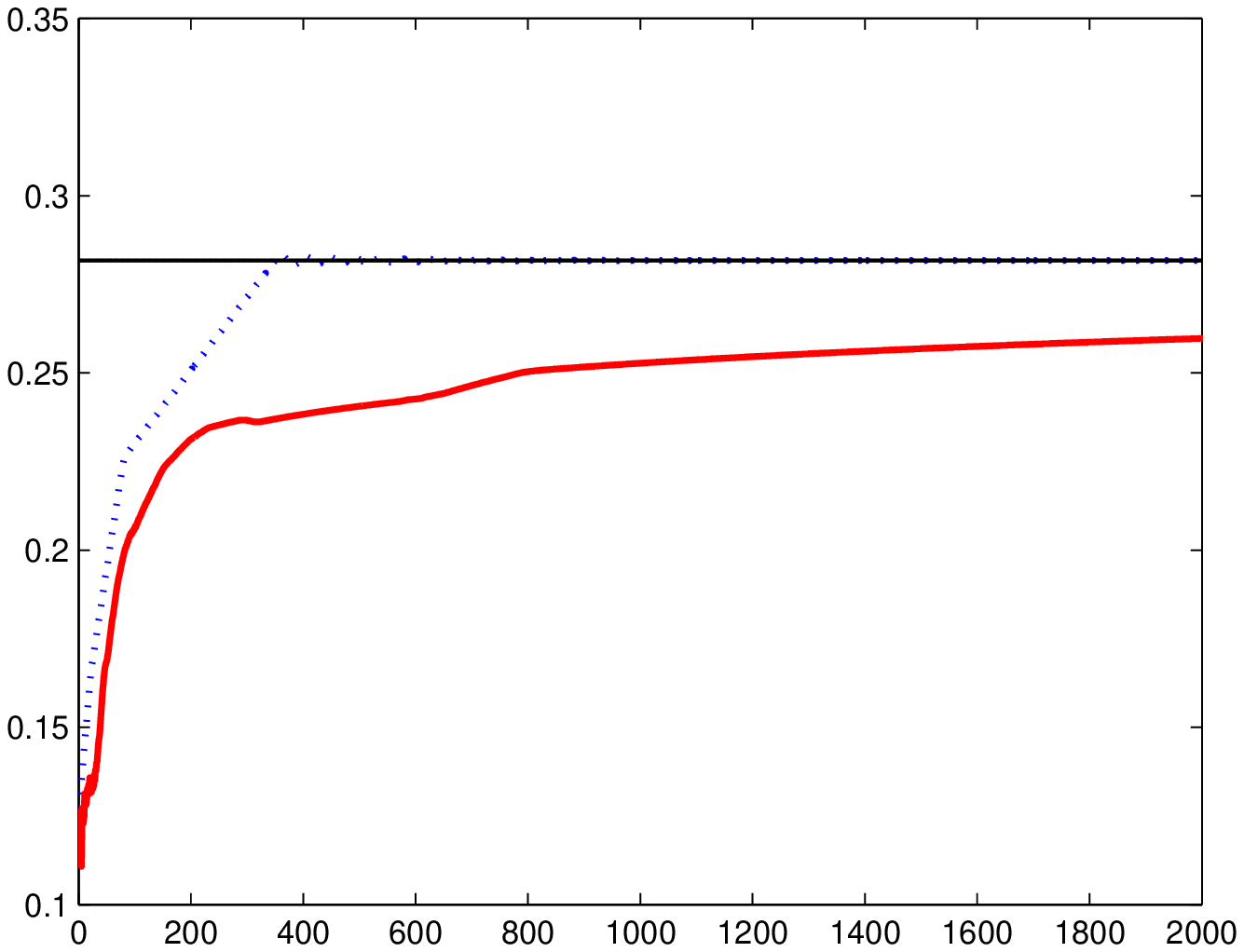}
  \end{array}
  \]
  \caption{Median consensus. Left: disagreement measure $n\to\log\|J^\perp x_n\|$ for Algorithms 1 and 2. Right: function $n\to\bar x_n$ for Algorithm 1 and 2, and a horizontal line corresponding to $\median(x_0)$. Observe an interesting feature of Algorithm~2: it reaches the target value at about iteration $350$ and then starts slightly oscillating around it; while Algorithm~1 does not converge after $2\,000$ iterations.}
  \label{fig:algocompregmed}
\end{figure}

\subsection{Behavior in the presence of stubborn agents}

We now investigate numerically the case when stubborn agents are introduced in the network, and more specifically the case where one single stubborn agent is introduced and the graph is the complete graph to validate Theorem~\ref{the:robTVGA}. We use the same initialization state as in the previous experiments, except that the first agent is assumed stubborn. The target value in this case is $\bar x_0^R = 0.1284$. Then we distinguish three cases: $x_0(1) = 10$, $x_0(1) = 0.16$ and $x_0(1) = -10$. When $\lambda = 0.05 \geq\|x_0^R - \bar x_0^R\|_* = 0.0133$ (as computed by Algorithm~0), there should be convergence to consensus to $\bar x_0^R + \lambda$ in the first case, $x_0(1)$ in the second case and $\bar x_0^R - \lambda$ in the third case; for both Algorithms. This is exactly what is found in the numerical experiments we performed as reported in Figure~\ref{fig:stub}; only experiments using Algorithm 2 are reported to avoid too much clutter.

\begin{figure}[ht!]
  \[
  \begin{array}{cc}
  \includegraphics[width=.5\linewidth]{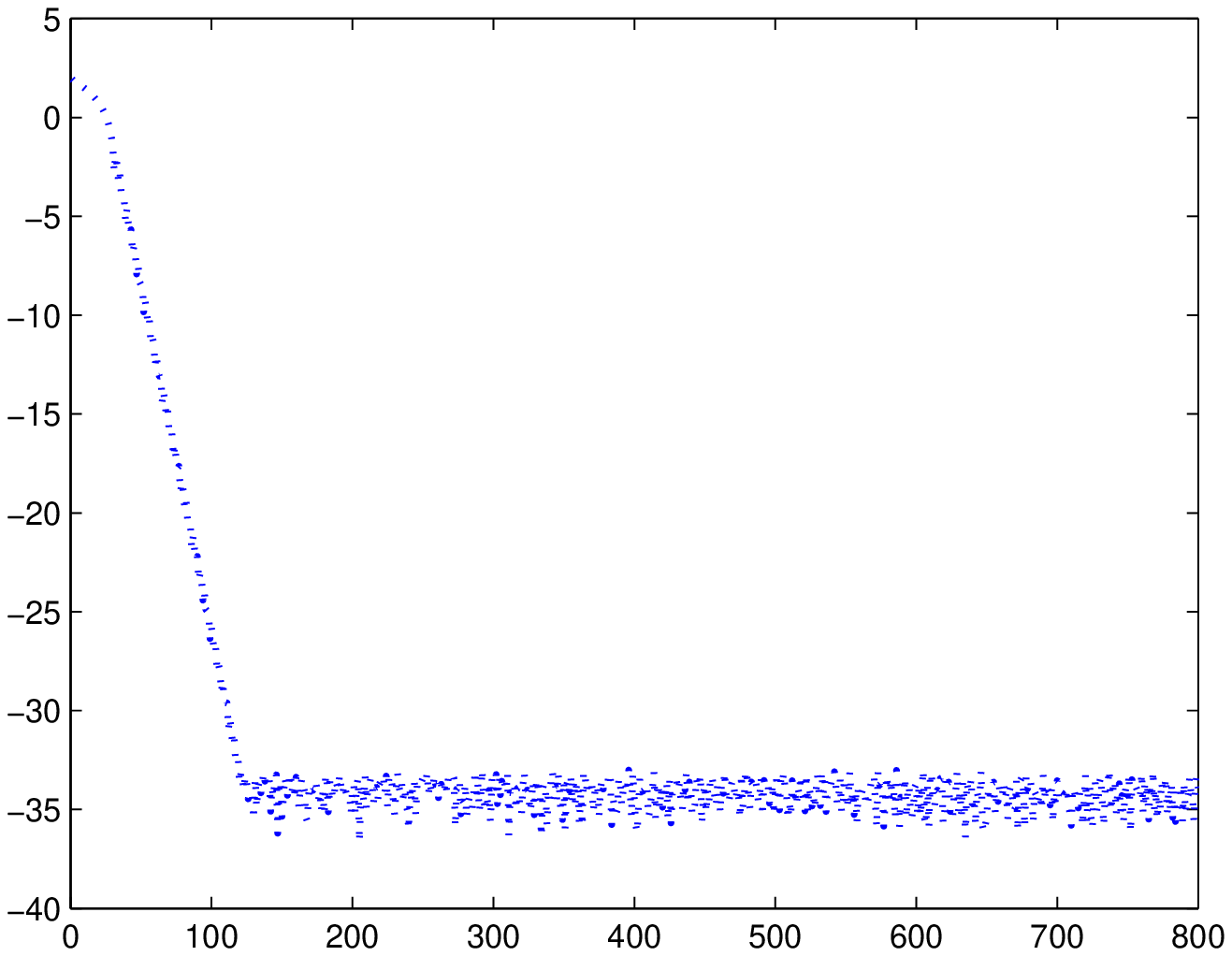}&
  \includegraphics[width=.5\linewidth]{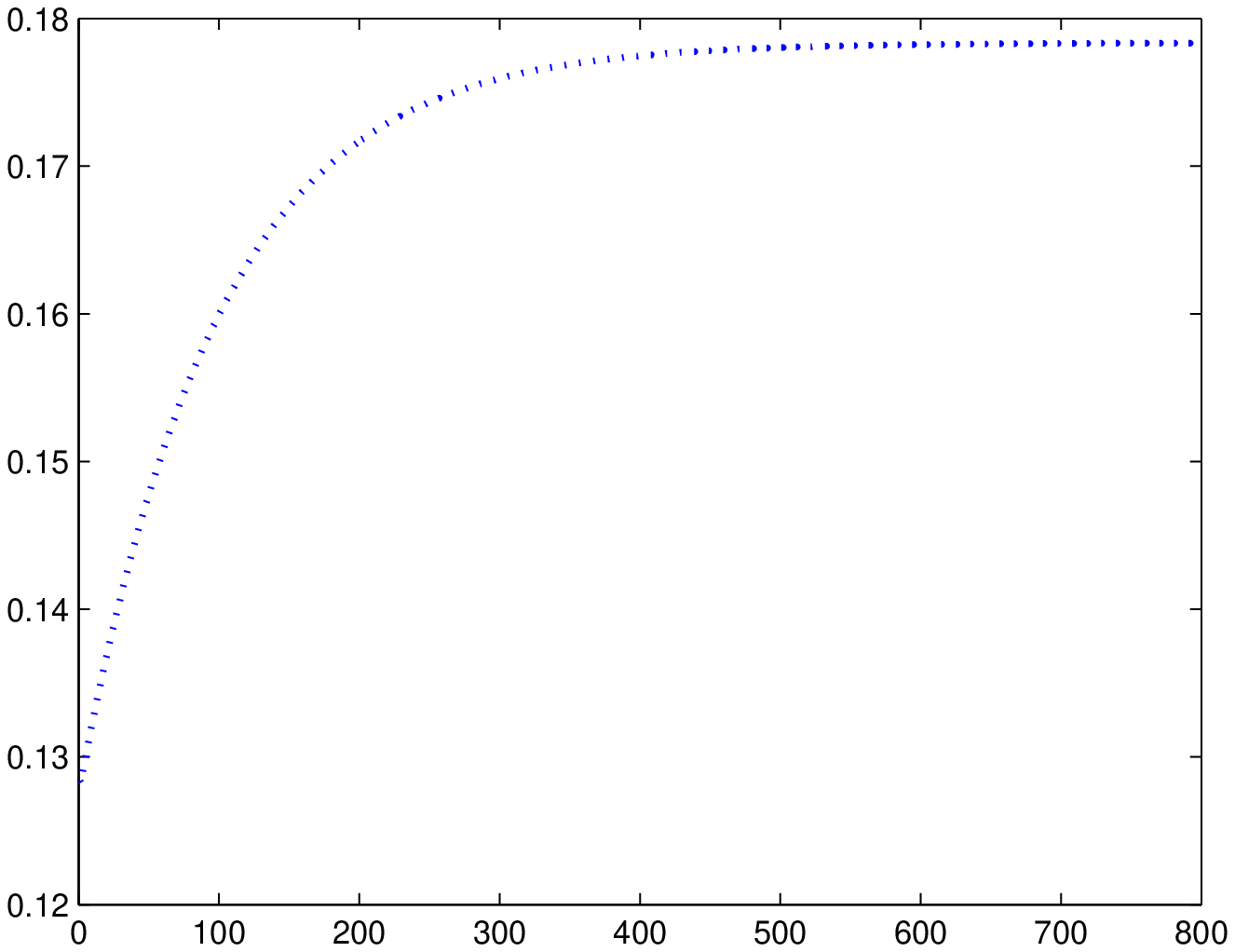}\\
  \includegraphics[width=.5\linewidth]{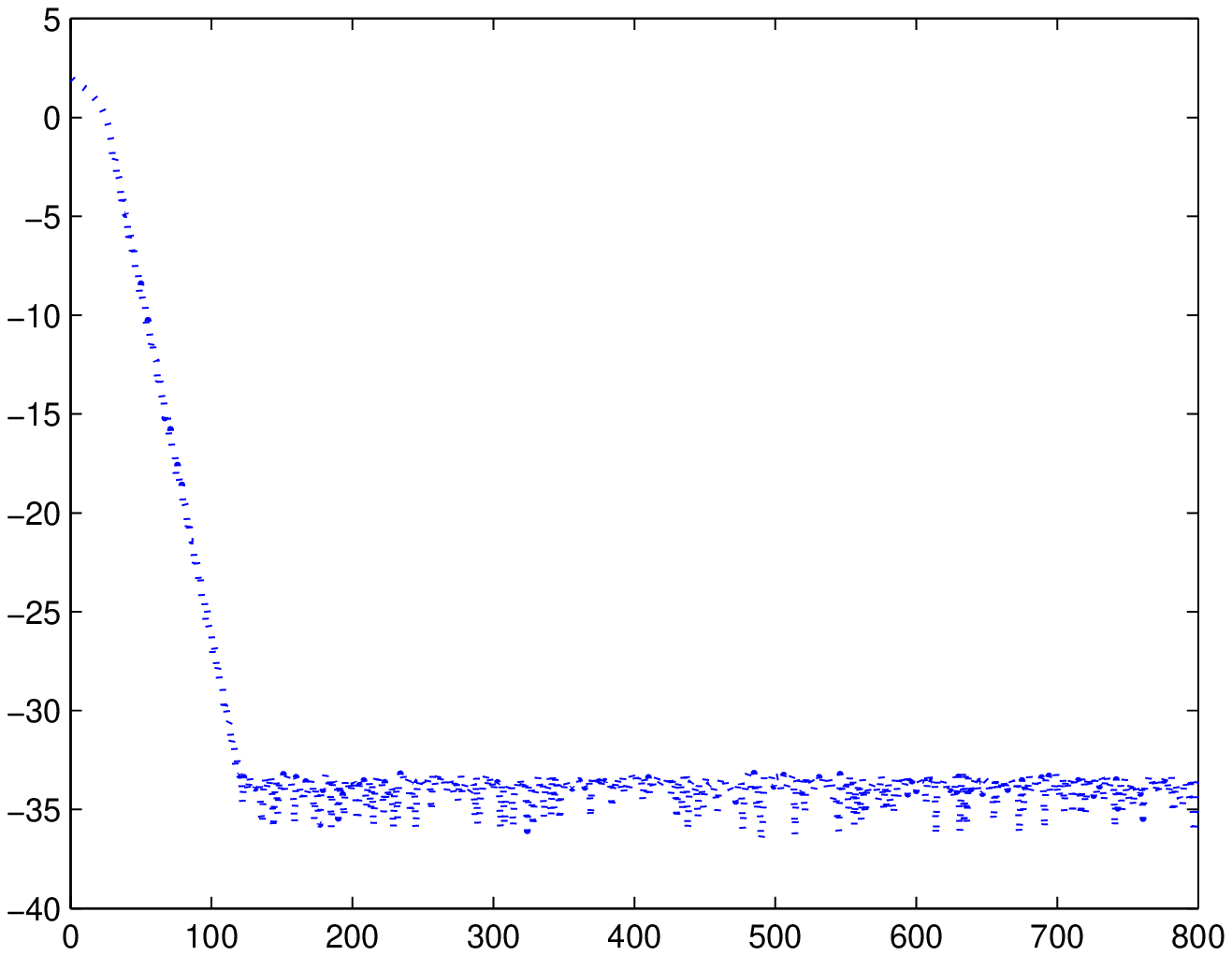}&
  \includegraphics[width=.5\linewidth]{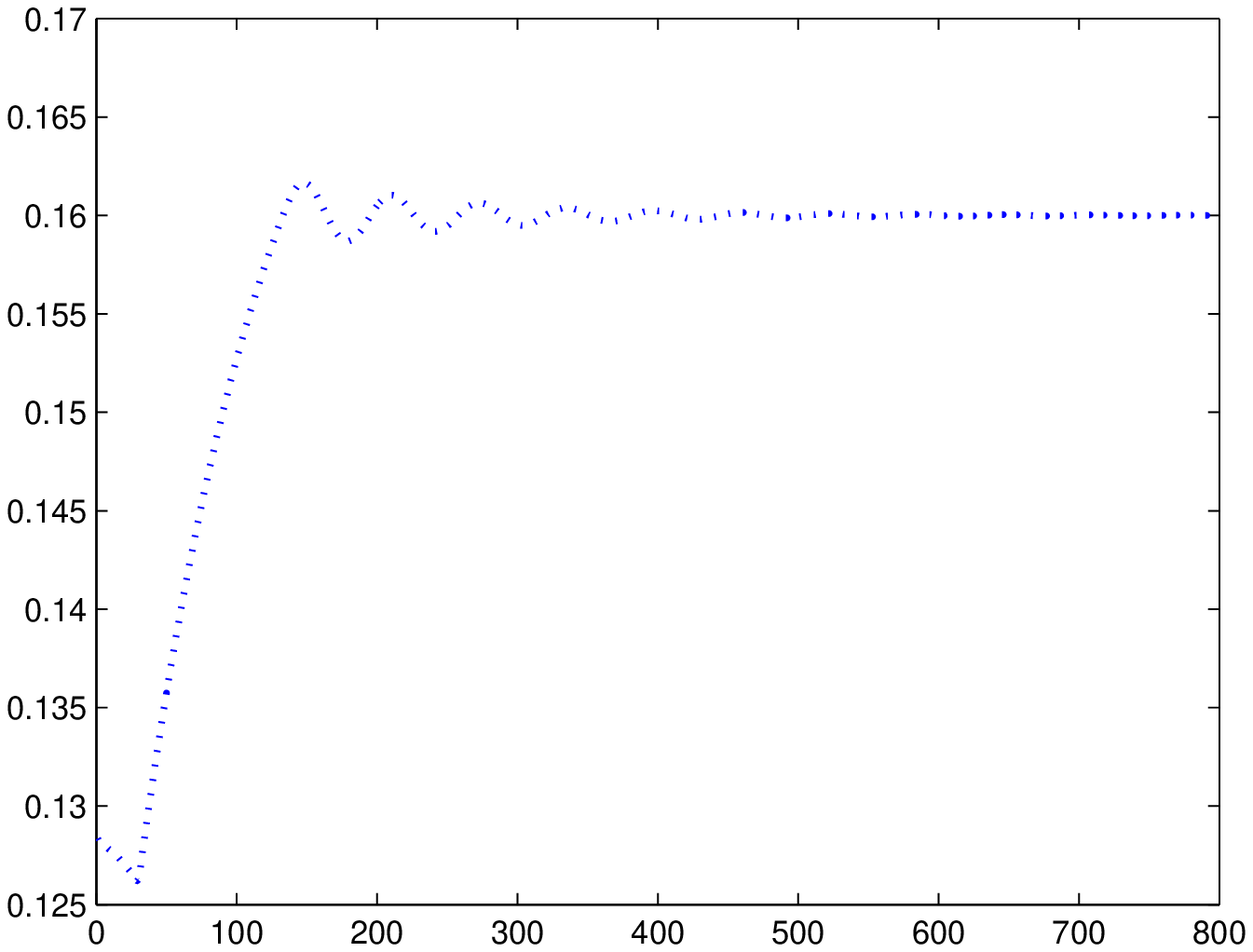}\\
  \includegraphics[width=.5\linewidth]{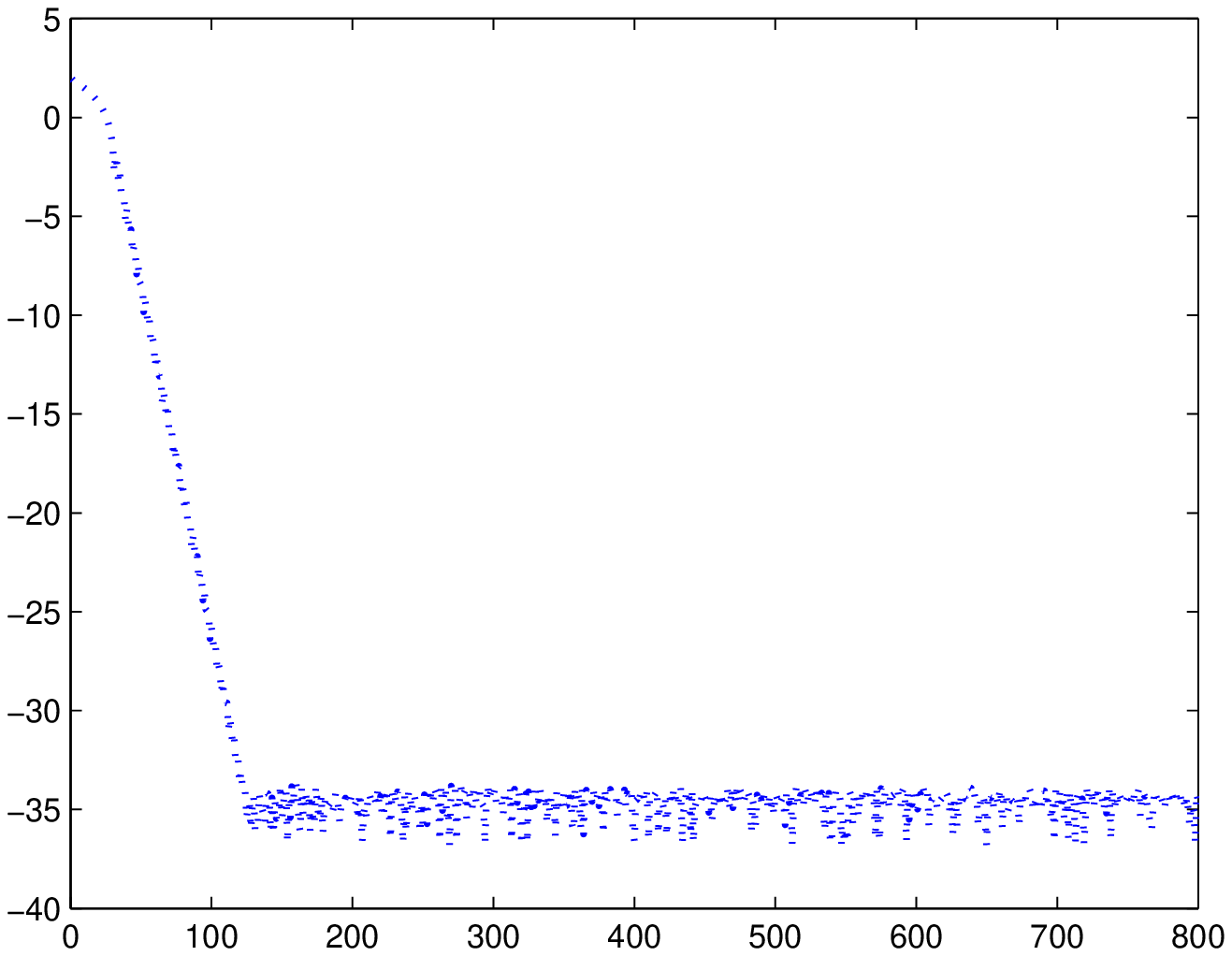}&
  \includegraphics[width=.5\linewidth]{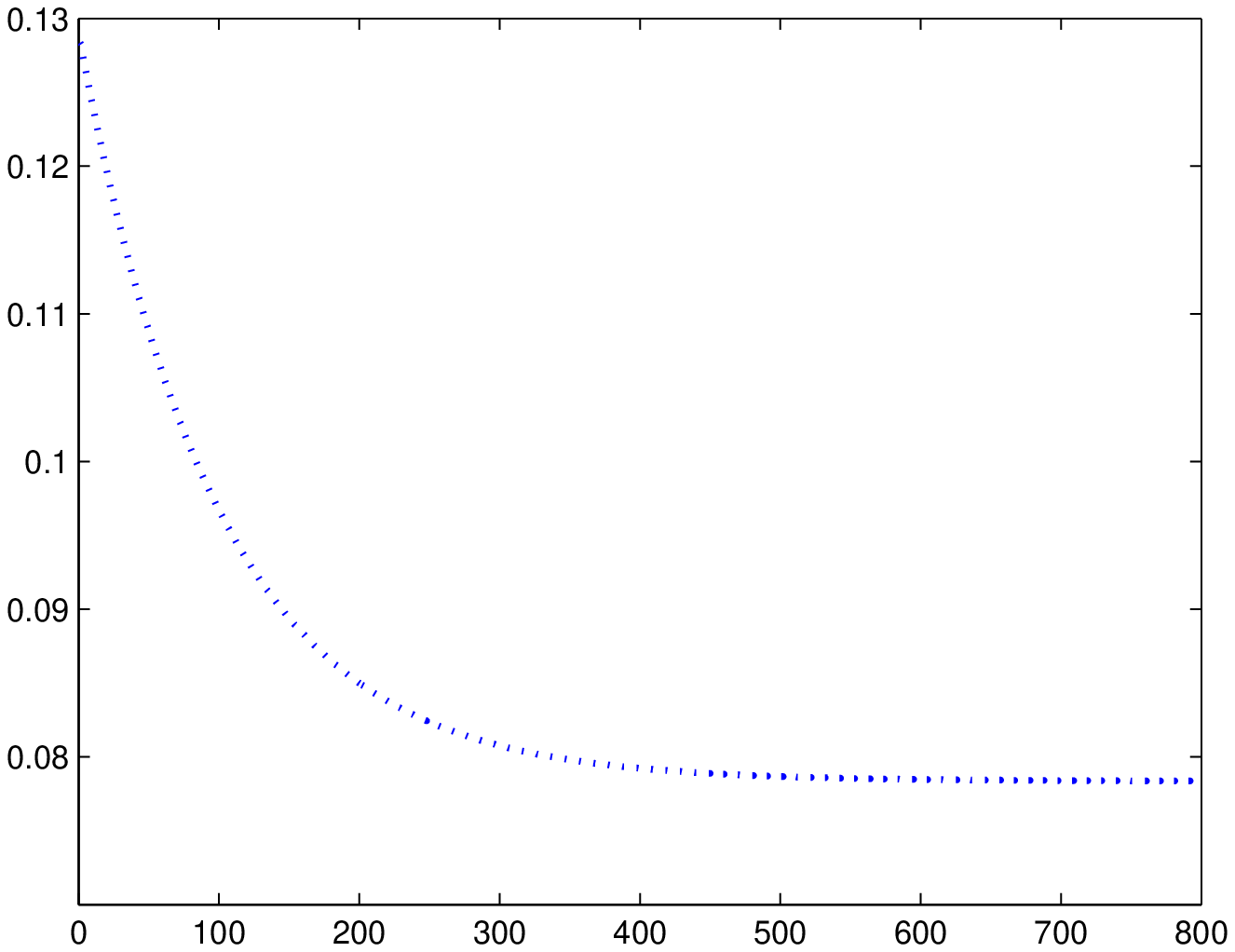}
  \end{array}
  \]
  \caption{With a stubborn agent. Left: disagreement measure $n\to\log\|J^\perp x_n\|$ for Algorithm 2. Right: function $n\to\bar x_n$ for Algorithm 2. First line stubborn agent has value $x_0(1) = 10$; $\bar x_n$ converges to $.1784$ which corresponds to $\bar x_0^R + \lambda$ up to precision $10^{-4}$. Mid line stubborn has value $x_0(1)=0.16$; again, well aligned with Theorem~\ref{the:robTVGA} since $\bar x_n$ converges to $0.16$. Third line, $x_0(1)= -10$; $\bar x_n$ converges to $0.0784$ that corresponds to $\bar x_0^R-\lambda$.}
  \label{fig:stub}
\end{figure}

\bibliographystyle{IEEEbib}
\bibliography{biblio}

\appendices


\section{Proofs of Section~\ref{sec:tvbasics}}
\label{app:tvbasics}

\subsection{Proof of Proposition~\ref{prop:meyer-ball}}

\begin{proof}
First the set $\Xi_u = \{\xi\in\mathbb{R}^{{\vec E}}: u = \div\xi\}$ is not empty. Indeed, $\grad:\mathbb{R}_0^V\to\mathbb{R}^{{\vec E}}$ is into since $G$ is connected. Hence $\div:\mathbb{R}^{{\vec E}}\to\mathbb{R}^V_0$ is onto.

Assume $u = \div\xi$, then $\langle u,x\rangle = -\langle\xi,\grad x\rangle\leq\|\xi\|_\infty\sum_{e\in{\vec  E}}|\grad x|(e) = \|\xi\|_\infty\|x\|_{\tv}$. Hence $\|u\|_* \leq \|\xi\|_\infty$, and $\|u\|_* \leq \inf\{\|\xi\|_\infty:u = \div\xi\}$. Reciprocally, since $G$ is assumed connected, $\grad:\mathbb{R}_0^V\to\mathbb{R}^{{\vec E}}$ is into; let us denote by $R$ its range. Then $\grad:(\mathbb{R}_0^V,\|\cdot\|_{\tv})\to (R,\|\cdot\|_1)$ is an isometry. Moreover, using Hahn-Banach theorem (finite dimensional case), one can embed isometrically $(R,\|\cdot\|_1)^*$ into $(\mathbb{R}^{{\vec E}},\|\cdot\|_1)^*\simeq(\mathbb{R}^{{\vec E}},\|\cdot\|_\infty)$. Hence to $u\in(\mathbb{R}_0^V,\|\cdot\|_*)$ one can associate $\varphi\in\mathbb{R}^{{\vec E}}$ such that $\langle u,x\rangle = \langle\varphi,\grad x\rangle$ with $\|\varphi\|_\infty=\|u\|_*$. It suffices to take $\xi = -\varphi$ to have $u=\div\xi$ with $\|u\|_* = \|\xi\|_\infty$ to prove the reverse inequality.
\end{proof}

\subsection{Proof of Proposition~\ref{prop:subgradient-norm}}
\begin{proof}
Let $u$ be such that $\|u\|_* \leq 1$ and $\langle u,x\rangle = \|x\|_{\tv}$. Then, $\langle u,y-x\rangle + \|x\|_{\tv} = \langle u,y\rangle\leq \|u\|_*\|y\|_{\tv} = \|y\|_{\tv}$ which means that $u\in\partial\|x\|_{\tv}$.
Conversely, assume $u\in\partial\|x\|_{\tv}$ and $x_u$ is s.t. $\|x_u\|_{\tv} = 1$ and $\langle u,x_u\rangle = \|u\|_*$. Define $y_u = \|x\|_{\tv}x_u$; one has: $\|y_u\|_{\tv} - \|x\|_{\tv}\geq \langle u,y_u - x\rangle$, which gives $0\geq \|u\|_*\|x\|_{\tv} - \langle u,x\rangle$. By inequality $\|u\|_*\|x\|_{\tv} - \langle u,x\rangle\geq0$, one has $\langle u,x\rangle = \|u\|_*\|x\|_{\tv}$.
Moreover, as $u\in\partial\|x\|_{\tv}$ $\|2x\|_{\tv} - \|x\|_{\tv}\geq \langle u,x\rangle = \|u\|_*\|x\|_{\tv}$.  Consequently, f $x \neq 0$, then $\|u\|_* = 1$.

If $x = 0$, then writing $\langle u,x_u\rangle \leq   \|x_u\|_{\tv}$ directly leads to $\|u\|_* \leq 1$.
\end{proof}


\subsection{Proof of Lemma~\ref{prop:coarea}}

\begin{proof}
First notice that the integral is well defined since $\lambda\in\mathbb{R}\mapsto\per(\{x\geq\lambda\})$ has its support included in $[\min x,\max x]$ and is piecewise constant with finite values. For each edge $\{v,w\}\in E$, denote by $I_e = [x(v)\wedge x(w), x(v)\vee x(w)]\subset\mathbb{R}$. Now, it is easy to check that $\per(\{x\geq\lambda\}) = \sum_{e\in E}1_{I_e}(\lambda)$.
Hence,
\[
\int_{-\infty}^{+\infty}\per(\{x\geq\lambda\})\d\lambda = \sum_{e\in E}\int_{-\infty}^{+\infty}1_{I_e}(\lambda)\d\lambda = \sum_{e\in E}|I_e|
\]
where $|I|$ denotes the length $b-a$ of interval $I=[a,b]$. The rightmost term is equal to $\|x\|_{\tv}$, which completes the proof.
\end{proof}

\subsection{Proof of Proposition~\ref{prop:meyer-perim}}
\begin{proof}
  Since $\mathbb{R}_0^V$ is finite dimensional, there exists $x_u\in\mathbb{R}_0^V$ with $\|x_u\|_{\tv} = 1$ such that $\|u\|_* = \langle u,x_u\rangle$. Since $\langle u,1_V\rangle = 0$ one has $\langle u,\tilde x_u\rangle = \langle u,x_u\rangle$ with $\tilde x_u = x_u-(\min_vx_u(v))1_V$. Now, let us consider subsets of $V$ having the form $S_\mu = \{\tilde x_u\geq \mu\}$ for $\mu\in\mathbb{R}$. Notice that $S_\mu=V$ for $\mu\leq 0$ and $S_\mu=\emptyset$ for $\mu>M$ with $M>0$ large enough. Hence, the following integral is well defined:
\[
\int_{-\infty}^{+\infty}\langle u,1_{S_\mu}\rangle\d\mu
\]
And,
\[
\int_{-\infty}^{+\infty}\langle u,1_{S_\mu}\rangle\d\mu = \int_0^M\langle u,1_{S_\mu}\rangle\d\mu = \langle u,\int_0^M1_{S_\mu}\d\mu\rangle\;.
\]
where $\int_0^M1_{S_\mu}\d\mu$ denotes function $v\mapsto\int_0^M1_{S_\mu}(v)\d\mu$.
Moreover $\forall v\in V$,
\[
\int_0^M 1_{S_\mu}(v)\d\mu = \int_0^{+\infty}1_{\{\mu\leq \tilde x_u(v)\}}\d\mu=\tilde x_u(v)
\]
Hence
\[
\int_{-\infty}^{+\infty}\langle u,1_{S_\mu}\rangle\d\mu = \langle u,\tilde x_u\rangle\;.
\]
 By definition of $\|u\|_*$ and the fact that (i) $\langle u,1_V\rangle=0$, (ii) $\|x\|_{\tv} = \|x+c1_V\|_{\tv}$, one has $\langle u,1_{S_\mu}\rangle \leq \|u\|_*\|1_{S_\mu}\|_{\tv}$. Hence, function $\mu\mapsto \langle u,1_{S_\mu}\rangle - \|u\|_*\|1_{S_\mu}\|_{\tv}$ is nonpositive. Integrating and using the co-area formula, one gets, for almost every $\mu\in\mathbb{R}$:
\[
\langle u, 1_{S_\mu} \rangle = \|u\|_*\|1_{S_\mu}\|_\tv
\]
{\sl A fortiori}, the set of such $\mu$ is not empty, which concludes the proof \modif{of the first equality.

It remains to prove the second equality.
Using the fact that $u \in \mathbb{R}_0^V$, we get that $|\langle u, 1_{S} \rangle| = | \langle u, 1_{V \setminus S} \rangle|$.  Moreover, using the first equality of Proposition \ref{prop:meyer-perim} and  equality $\per(S) = \per(V \setminus S)$, we get that
$\|u\|_* = \max_{ {\emptyset\subsetneq S\subset V, |S| \leq |V|/2}}\frac{|\langle u,1_S\rangle|}{\|1_S\|_{\tv}}$. Let us consider a subset $S$  such that  $\|u\|_* = \frac{|\langle u,1_S\rangle|}{\|1_S\|_{\tv}}$ and assume that  $S = S_1 \cup S_2$  with $E(S_1,S_2) =\emptyset$ where $E(S_1,S_2)$ is the set of edges having one extremity in $S_1$ and one extremity in $S_2$.   Then the ratio  $\frac{|\langle u,1_S\rangle|}{\|1_S\|_{\tv}}$ can be written as
$ \frac{|   \langle u,1_{S_1} \rangle  + \langle u,1_{S_2}\rangle           |   }{ \|1_{S_1}\|_{\tv} + \|1_{S_2}\|_{\tv}  }$  which is less than or equal to $ \frac{|   \langle u,1_{S_1} \rangle|  + |\langle u,1_{S_2}\rangle           |   }{ \|1_{S_1}\|_{\tv} + \|1_{S_2}\|_{\tv}  }$.  The last ratio is clearly bounded by
$\max\left(       \frac{|   \langle u,1_{S_1} \rangle|   }{ \|1_{S_1}\|_{\tv}  }, \frac{|   \langle u,1_{S_2} \rangle|   }{ \|1_{S_2}\|_{\tv}  }   \right)$.  Since $S$ is maximizing the ratio $\frac{|\langle u,1_S\rangle|}{\|1_S\|_{\tv}}$, we should have  $\frac{|\langle u,1_S\rangle|}{\|1_S\|_{\tv}} =  \max\left(       \frac{|   \langle u,1_{S_1} \rangle|   }{ \|1_{S_1}\|_{\tv}  }, \frac{|   \langle u,1_{S_2} \rangle|   }{ \|1_{S_2}\|_{\tv}  }   \right)$. Consequently, to compute $\|u\|_*$, we can focus on subsets $S$ inducing a connected graph $G(S)$.
}

\end{proof}

\subsection{Proof of Proposition~\ref{prop:convergence0}}
\label{app:cdn}
\begin{proof}
First, observe that since ${\langle u,1_{S_{i-1}}\rangle} - \lambda_{i}  {\per (S_{i-1})} = 0 $, and $S_i  = \arg\max_{S \subset V} \langle u, 1_{S} \rangle  - \lambda_i \per(S)$, we necessarily have $\langle u, 1_{S_i} \rangle  - \lambda_i \per(S_i) \geq 0$. In other words, we have $\frac{ \langle u,1_{S_{i}}\rangle} {\per(S_i)  } \equiv \lambda_{i+1} \geq  \lambda_i   $.

Let us now assume that we obtained $\lambda_{i+1} =  \lambda_i$ for some $i$. This is equivalent to say that $\langle u, 1_{S_i} \rangle  - \lambda_i \per(S_i) = 0$. By definition of $S_i$, we should have
$\langle u, 1_{S} \rangle  - \lambda_i \per(S) \leq \langle u, 1_{S_i} \rangle  - \lambda_i \per(S_i)$ for each subset.  Consequently, $\frac{ \langle u,1_S \rangle} {\per(S)  } \leq  \lambda_{i}$ for each $S$ with equality for $S=S_{i-1}$. The convergence of the algorithm is then achieved.

To finish the proof, let us try to bound the number of iterations of the algorithm.
Assume that $\lambda_{i+1} > \lambda_{i}$. This implies that
$ \langle u, 1_{S_i} \rangle  - \lambda_i \per(S_i)  > 0 = \langle u, 1_{S_{i-1}} \rangle  - \lambda_i \per(S_{i-1})  $. Moreover, by definition of $S_{i-1}$, we should have
$\langle u, 1_{S_i} \rangle  - \lambda_{i-1} \per(S_i)  \leq \langle u, 1_{S_{i-1}} \rangle  - \lambda_{i-1} \per(S_{i-1})  $.  These two inequalities can be written as follows:
$  \lambda_i ( \per(S_i) -  \per(S_{i-1}))      < \langle u, 1_{S_i} \rangle  -   \langle u, 1_{S_{i-1}} \rangle   \leq   \lambda_{i-1} ( \per(S_i) -  \per(S_{i-1}))          $. This holds only if $\per(S_i) -  \per(S_{i-1}) <0$. Said another way, if $\lambda_{i+1} > \lambda_{i}$ then  $\per(S_i)$ decreases at least by one unit.  Since we know that $\per(S_i)$ is between $1$ and $|E|-1$, the  number of iterations is bounded by $|E|$.
\end{proof}

\section{Proofs of Section~\ref{sec:minimizers}}

\subsection{Proof of Theorem~\ref{the:min}}
\begin{proof}
[1) $\Leftrightarrow$ 2)] Note that $x^\star1_V$ is a minimizer of~$F+\lambda \|\cdot\|_\tv$ iff
$0\in \partial F(x^\star1_V) + \lambda \partial \|x^\star1_V\|_\tv$.
From Proposition \ref{prop:meyer-ball}, $ \partial \|x^\star1_V\|_\tv=B_*$.
Therefore, 1) holds iff there exists $u\in \partial F(x^\star1_V)$ such that
$0\in u+\lambda B_*$. Otherwise stated, there exists $u\in \partial F(x^\star1_V)$
such that $u\in \lambda B_*$.

[2) $\Leftrightarrow$ 3)] is a consequence of Proposition~\ref{prop:meyer-perim}.

[3) $\Rightarrow$ 4)] As $\sum_v\partial f_v= \partial (\sum_v f_v)$, condition $\sum_{v\in V}u(v)=0$ implies that $0\in \partial (\sum_v f_v)(x^\star)$.
Thus,  $x^\star$ is a minimizer of $\sum_v f_v$.
\end{proof}

\subsection{Proof of Proposition~\ref{prop:ac}}
\begin{proof}
Uniqueness of the minimizers is a consequence of the strict convexity, while the equivalence of the three statements
 follows directly from Theorem~\ref{the:min}.
 \end{proof}

\subsection{Proof of Proposition~\ref{prop:MC}}
\begin{proof}
Consider any $x^\star\in \median(x_0)$.
  Consider a bijection $\sigma:\{1\cdots |V|\}\to V$ such that
$(x_0\circ \sigma) (1)\leq \cdots \leq (x_0\circ \sigma) (|V|)$.
Define $u\in \bR^V$ as follows. When $|V|$ is odd, $u(v)$ is equal to
$1$ if $\sigma^{1}(v)<\frac{|V|+1}2$, to $0$ if $\sigma^{-1}(v)=\frac{|V|+1}2$ and to $-1$ otherwise.
When $|V|$ is even,  $u(v)$ is equal to $1$ if $\sigma^{-1}(v)\leq \frac{|V|}2$ and to $-1$ otherwise.
It is straightforward to verify that $u\in\partial F(x^\star 1_V)$.
As $u\in \cU$, one has $\|u\|_*\leq \lambda$. Thus $\partial F(x^\star 1_V)\cap \lambda B_*$ is nonempty.
By Theorem~\ref{the:min}, $x^\star$ is a minimizer of~(\ref{eq:reg}).

By Theorem~\ref{the:min} again, all minimizers of~(\ref{eq:reg}) which belong to $\cC$ necessarily
correspond to minimizers of~(\ref{eq:pb}). Thus the set of minimizers of (\ref{eq:reg}) which belong to $\cC$ is equal to $\median(x_0)1_V$.
It remains to show that~(\ref{eq:reg}) has no minimizers outside the consensus space $\cC$. Denote by $B_0 = \{u\in\mathbb{R}_0^V:\|u\|_\infty\leq 1\}$. From Lemma~\ref{lem:extrem}, its extremal points are given by set $\cU$. Recall that, by definition,
\[
\lambda_0 = \max\{\|u\|_*:u\in\cU\}\;.
\]
Since $B_0$ is a polytope (bounded intersection of halfspaces), it is well known that it is the convex hull of its extremal points. Triangular inequality implies in turn:
\[
\lambda_0 = \max\{\|u\|_*:u\in B_0\}\;.
\]
Now, assume that function $\bs x:V\to\mathbb{R}$ is a minimizer such that $\|\bs x\|_\tv > 0$.  Then for some $g\in\partial F(\bs x)$, one has $g = -\lambda u$ with $u\in\mathbb{R}_0^V$ such that $\langle u,\bs x\rangle = \|x\|_\tv$ and $\|u\|_* \leq 1$ (by Proposition \ref{prop:subgradient-norm}). On the one hand, this implies that $g\in B_0$ since $\|\partial F(\cdot)\|_\infty\leq 1$ and on the other hand it implies that $\|g\|_* = \lambda$. This contradicts $\lambda > \lambda_0$.
\end{proof}

Let us recall a standard definition and derive an easy lemma.
\begin{definition}
  Assume $C$ is a convex set. A point $p\in C$ is said \emph{extremal} when:
  \[
  \lambda\in(0,1)\,,x\in C\,,y\in C\,, p = \lambda x + (1-\lambda)y \Rightarrow x=y
  \]
\end{definition}
\begin{lemma}
 \label{lem:extrem}
  The polyhedral set $B_0 = \{u\in\mathbb{R}_0^V:\|u\|_\infty\leq 1\}$ has extremal point set $\cU$, as defined in Section~\ref{subsec:medcons}.
\end{lemma}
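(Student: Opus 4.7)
The goal is the standard polytope fact that the extreme points of the $\ell_\infty$-ball intersected with the mean-zero hyperplane are exactly the $\{-1,0,+1\}$-valued vectors prescribed by $\cU$. I would prove the two inclusions separately, using only the definition of extremality and the single linear constraint $\sum_v u(v)=0$.

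\textbf{Step 1: $\cU \subset \mathrm{ext}(B_0)$.} Fix $u\in\cU$ and suppose $u = \lambda x + (1-\lambda)y$ with $\lambda\in(0,1)$ and $x,y\in B_0$. For every $v$ with $u(v)=\pm 1$, the strict convexity of extreme points of $[-1,1]$ at $\pm 1$ forces $x(v)=y(v)=u(v)$. If $|V|$ is even, all coordinates of $u$ are $\pm 1$, so $x=y=u$ directly. If $|V|$ is odd, there is a unique vertex $v_0$ with $u(v_0)=0$; at that vertex we invoke the mean-zero constraint on $x$ and $y$: since they already agree with $u$ at every other vertex and $\langle x,1_V\rangle=\langle y,1_V\rangle=0$, both are forced to equal $0$ at $v_0$. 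In either case, $x=y=u$, establishing extremality.

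\textbf{Step 2: $\mathrm{ext}(B_0) \subset \cU$.} Let $u$ be extremal and set $I=\{v : |u(v)|<1\}$. The key observation is that if $|I|\geq 2$, I can pick two distinct vertices $v_1,v_2\in I$ and perturb $u$ by $\pm\varepsilon(1_{v_1}-1_{v_2})$ for some small $\varepsilon>0$; this perturbation keeps the mean zero and, for $\varepsilon$ small enough, stays in $B_0$, exhibiting $u$ as the midpoint of two distinct points of $B_0$, contradiction. Hence $|I|\leq 1$. A parity analysis on $\sum_v u(v)=0$ then finishes: if $|I|=0$ all coordinates lie in $\{-1,+1\}$, which is compatible with zero sum only when $|V|$ is even, in which case $u$ is a permutation of the even $d$; if $|I|=1$ with interior vertex $v_0$, the mean-zero constraint forces $u(v_0)$ to be an integer in $(-1,1)$, i.e.\ $u(v_0)=0$, which requires $|V|-1$ to be even and the remaining $\pm 1$ coordinates to be balanced, so $u$ is a permutation of the odd $d$.

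\textbf{Expected obstacle.} There is no real difficulty; the only thing one has to be careful about is to respect both constraints simultaneously (the $\ell_\infty$ bound and the single linear equality $\langle u,1_V\rangle = 0$) when constructing the perturbation in Step 2, which is exactly why the perturbation must be chosen with sum zero, namely along $1_{v_1}-1_{v_2}$. The parity bookkeeping between the two cases ($|V|$ even versus odd) should be stated explicitly so as to recover exactly the sequence $d$ defined in Section~\ref{subsec:medcons}.
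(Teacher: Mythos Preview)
Your proposal is correct and your Step~2 is essentially identical to the paper's argument: the paper also perturbs along $\pm\epsilon(\delta_{v_1}-\delta_{v_2})$ to show that at most one coordinate can lie in $(-1,1)$, and then invokes the zero-sum constraint to finish. In fact the paper's proof is terser than yours and only establishes the inclusion $\mathrm{ext}(B_0)\subset\cU$ (your Step~2), leaving the parity discussion as a one-line remark and omitting your Step~1 entirely; your write-up is therefore a strictly more complete version of the same proof.
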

\begin{proof}
  Assume $u$ is an extremal point of $B_0$. And assume by contradiction that there exists $(v,v')\in V^2$ such that: $v\not=v'$ and $\max(|u(v)|,|u(v')|)<1$. Denote by $\epsilon = 1  - \max(|u(v)|,|u(v')|)>0$. Then one has $u= \frac12(u + \epsilon \delta_v - \epsilon\delta_{v'}) + \frac12(u-\epsilon\delta_v+\epsilon\delta_{v'})$, where $\delta_v$ denotes the function from $V$ to $\mathbb{R}$ that takes value $0$ for all $w\not =v$ and value $1$ for $v$, which contradicts extremality. Hence the set $\{v\in V:|u(v)|<1\}$ has at most one element. Considering that $\sum_v u(v)=0$ gives the result.
\end{proof}

\section{Proofs of Section~\ref{sec:algo}}

\subsection{Convergence of Subgradient Algorithms}
\label{app:subg}

Although the result given below is part of the folklore in non-smooth
optimization, the convergence proof is often provided with some
boundedness assumption of subgradients that are in fact not
needed. For the sake of completeness, we provide a self-contained proof.

\begin{assumption}
  \label{assum:f}
  Let $\mathbb{H}$ denote a Euclidean space and $f$ a function from $\mathbb{H}$ to $\mathbb{R}$ such that:
  \begin{enumerate}
    \item $f$ is convex continuous with subgradient $\partial f$.
    \item Lower level sets $L_y = \{x\in\mathbb{H}:f(x)\leq y\}$ are bounded.
    \item There exists $C>0$ such that, $\forall x\in\mathbb{H}$, $\forall g\in\partial f(x)$, $\|g\| \leq C(1+\|x\|)$.
  \end{enumerate}
\end{assumption}

\begin{assumption}
  \label{assum:step}
  Let $\gamma_n$ denote a sequence of positive scalars such that:
  \begin{enumerate}
    \item $\sum_n\gamma_n=+\infty$
    \item $\sum_n\gamma_n^2<+\infty$
  \end{enumerate}
\end{assumption}

\begin{proposition}
\label{prop:subg}
  Under Assumptions~\ref{assum:f} and \ref{assum:step}, any sequence $(x_n)_{n\in\mathbb{N}}$ obeying the subgradient descent scheme:
  \[
  x_{n+1} = x_n - \gamma_{n}g_{n}\;,
  \]
  where $g_n\in\partial f(x_n)$; converges to the set $S = \{x\in\mathbb{H}:f(x) = \inf_{\mathbb{H}} f\}$.
\end{proposition}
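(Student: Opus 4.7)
The plan is to follow the classical Lyapunov argument for subgradient descent, with the twist that subgradients are only known to grow linearly rather than being uniformly bounded. This forces us to establish boundedness of $(x_n)$ as part of the argument rather than assuming it.

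First, I would fix $x^\star \in S$ (nonempty by Assumption~\ref{assum:f}.2) and expand the squared distance to $x^\star$:
\[
\|x_{n+1}-x^\star\|^2 = \|x_n-x^\star\|^2 - 2\gamma_n\langle g_n, x_n-x^\star\rangle + \gamma_n^2\|g_n\|^2.
\]
By convexity, $\langle g_n, x_n-x^\star\rangle \geq f(x_n)-f(x^\star) \geq 0$. For the last term, the linear growth assumption yields $\|g_n\|^2 \leq 2C^2(1+\|x_n\|^2) \leq K(1+\|x_n-x^\star\|^2)$ for some constant $K$ depending on $C$ and $\|x^\star\|$. Substituting gives
\[
\|x_{n+1}-x^\star\|^2 \leq (1+K\gamma_n^2)\|x_n-x^\star\|^2 + K\gamma_n^2 - 2\gamma_n(f(x_n)-f(x^\star)).
\]

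Next, I would invoke the Robbins--Siegmund lemma (deterministic version): if $u_{n+1}\leq (1+a_n)u_n + b_n - c_n$ with $a_n,b_n \geq 0$ summable and $c_n \geq 0$, then $u_n$ converges and $\sum c_n < \infty$. Applying this with $u_n=\|x_n-x^\star\|^2$, $a_n=b_n=K\gamma_n^2$ (summable by Assumption~\ref{assum:step}.2), and $c_n=2\gamma_n(f(x_n)-f(x^\star))$, I obtain two key facts: (i)~$\|x_n-x^\star\|$ converges for every $x^\star \in S$, so $(x_n)$ is bounded; and (ii)~$\sum_n \gamma_n(f(x_n)-f(x^\star)) < \infty$.

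Combining (ii) with $\sum_n\gamma_n = +\infty$ forces $\liminf_n f(x_n) = f(x^\star) = \inf f$. By boundedness of $(x_n)$, extract a subsequence $x_{n_k} \to x_\infty$; continuity of $f$ gives $f(x_\infty) = \inf f$, so $x_\infty \in S$. Finally, apply fact (i) with the specific choice $x^\star = x_\infty$: the sequence $\|x_n - x_\infty\|$ converges, and since the subsequence $\|x_{n_k}-x_\infty\|$ tends to $0$, the entire sequence does, proving $x_n \to x_\infty \in S$.

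The main obstacle is handling the unbounded-subgradient regime: one cannot simply bound $\gamma_n^2\|g_n\|^2$ by a summable sequence a~priori. The fix is the self-referential bound $\|g_n\|^2 \leq K(1+\|x_n-x^\star\|^2)$, which transforms the recursion into the Robbins--Siegmund form where the ``$(1+a_n)u_n$'' coefficient absorbs the unknown growth of $\|x_n\|$. For completeness I would also include a short proof of the Robbins--Siegmund lemma, since it is the only non-elementary ingredient: setting $\tilde u_n = u_n \prod_{k\geq n}(1+a_k)^{-1}$ converts the inequality into a quasi-supermartingale relation, and monotone convergence on partial sums yields both the convergence of $u_n$ and the summability of $c_n$.
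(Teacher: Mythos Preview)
Your proposal is correct and follows essentially the same route as the paper: expand $\|x_{n+1}-x^\star\|^2$, absorb the linear growth of $\|g_n\|$ into a factor $(1+K\gamma_n^2)$ multiplying $u_n$, apply the deterministic Robbins--Siegmund lemma, deduce $\liminf f(x_n)=\inf f$, extract a cluster point in $S$, and rerun the argument with that cluster point to upgrade subsequential to full convergence. The only cosmetic difference is in the auxiliary lemma: the paper proves Robbins--Siegmund via the bound $1+x\leq e^x$ and direct iteration, whereas you sketch the equivalent ``renormalized sequence'' $\tilde u_n = u_n\prod_{k\ge n}(1+a_k)^{-1}$ approach.
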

\begin{proof}
  Since lower level sets are compact (closed by continuity of $f$ and bounded by assumption) there exists a point $x_*\in\mathbb{H}$ such that $f(x_*) = \min_{x\in\mathbb{H}} f(x)$.
  Denote by $u_n = \|x_n - x_*\|^2$. Thus,
  \[
  u_{n+1} = u_n - 2\gamma_{n}\langle x_n - x_*,g_{n}\rangle + \gamma_{n}^2\|g_{n}\|^2\;.
  \]
  As a consequence of the fundamental property of subgradients, one has $f(x_n) - f(x_*)\leq \langle x_n - x_*,g_n\rangle$. By assumption, $\|g_n\|\leq C(1+\|x_n\|)$. Hence, there exists some constant $M>0$ such that:
  \[
  u_{n+1}\leq u_n - 2\gamma_n (f(x_n)-f(x_*)) + \gamma_{n}^2(M+u_n)
  \]
  Applying Lemma~\ref{lem:deterRS} with $\alpha_n=\gamma_n^2$, $v_n = 2\gamma_n(f(x_n) - f(x_*))$ and $\beta_n = \gamma_n^2M$ yields:
  \begin{enumerate}
  \item $u_n$ converges.
  \item $\sum_n v_n < \infty$.
  \end{enumerate}
  Since $\sum_n \gamma_n =+\infty$ and $f(x_n) - f(x_*)\geq 0$ one necessarily has $\liminf_n f(x_n) - f(x_*) = 0$. Since $u_n$ is bounded, $x_n$ evolves in a compact space and has a convergent subsequence to some point $\tilde x\in\mathbb{H}$. By continuity of $f$, this point $\tilde x$ necessarily belongs to the set $S$. The previous computations are valid with $x_*$ replaced by $\tilde x$. Since Lemma~\ref{lem:deterRS} ensures that $u_n$ should converge, and it has a subsequence converging to $0$, it converges to $0$.
\end{proof}

\begin{lemma}[Deterministic Robbins-Siegmund]
  \label{lem:deterRS}
  Assume $u_n$ and $v_n$ are nonnegative scalar sequences, and $\alpha_n$, $\beta_n$  are sequences such that: $\sum_n |\alpha_n|<\infty$, $\sum_n|\beta_n|<\infty$ and, for all $n$,
  \[
   u_{n+1}\leq (1+\alpha_n)u_n + \beta_n - v_n\;.
  \]
  Then
  \begin{enumerate}
    \item $u_n$ converges to some limit $l\in\mathbb{R}$.
    \item $\sum_n v_n<\infty$.
  \end{enumerate}
\end{lemma}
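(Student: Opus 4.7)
My plan is a telescoping argument after two standard normalizations. First, because $u_n \geq 0$, one has $(1+\alpha_n)u_n \leq (1+|\alpha_n|)u_n$, so I may replace $\alpha_n$ by $|\alpha_n|$ and assume from the outset that $\alpha_n \geq 0$ — the inequality hypothesis is only weakened by this. Second, I set $A_n = \prod_{k=0}^{n-1}(1+\alpha_k)$, which is a nondecreasing product that converges to a finite limit $A_\infty \in [1,\infty)$ because $\sum_n \alpha_n < \infty$ implies $\sum_n \log(1+\alpha_n) < \infty$. Dividing the hypothesis by $A_{n+1} = (1+\alpha_n)A_n$ and introducing the normalized quantities $\tilde u_n = u_n/A_n$, $\tilde\beta_n = \beta_n/A_{n+1}$, $\tilde v_n = v_n/A_{n+1}$, the inequality collapses to the cleaner form $\tilde u_{n+1} \leq \tilde u_n + \tilde\beta_n - \tilde v_n$, in which the multiplicative perturbation has disappeared. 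Since $A_{n+1} \geq 1$, the new perturbation $\tilde\beta_n$ is still absolutely summable, and the new $\tilde v_n$ is still nonnegative.

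The central device is then to absorb the tail of the $\tilde\beta$-series into the state. I define
$$z_n \;=\; \tilde u_n + \sum_{k \geq n}|\tilde\beta_k|,$$
which is well defined and nonnegative. Substituting the reduced recursion and using $\tilde\beta_n \leq |\tilde\beta_n|$ gives $z_{n+1} \leq z_n - \tilde v_n \leq z_n$, so $z_n$ is a nonnegative nonincreasing sequence and therefore converges to some $z_\infty \geq 0$.

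The two conclusions then follow at once. Since the tail $\sum_{k \geq n}|\tilde\beta_k|$ tends to $0$, we obtain $\tilde u_n = z_n - \sum_{k \geq n}|\tilde\beta_k| \to z_\infty$, and hence $u_n = A_n \tilde u_n$ converges to a finite limit $A_\infty z_\infty$. For the summability, telescoping $\tilde v_n \leq z_n - z_{n+1}$ over $n = 0,\dots,N$ yields $\sum_{n=0}^N \tilde v_n \leq z_0$ uniformly in $N$, so $\sum_n \tilde v_n < \infty$; and because $v_n = A_{n+1}\tilde v_n \leq A_\infty \tilde v_n$, one concludes $\sum_n v_n \leq A_\infty z_0 < \infty$.

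I do not expect a genuine obstacle here. The only place where care is needed is the very first reduction: the bound $(1+\alpha_n)u_n \leq (1+|\alpha_n|)u_n$ relies essentially on $u_n \geq 0$, and without it the multiplicative factor could change sign and the reduction to $\alpha_n\geq 0$ would be unsound. Once past that point, everything is elementary telescoping and uses only that $\sum|\alpha_n|$ and $\sum|\beta_n|$ are finite and that $v_n \geq 0$.
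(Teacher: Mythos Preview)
Your proof is correct and is in fact the classical Robbins--Siegmund normalization argument. It differs from the paper's route: the paper does not divide by the product $A_n$ but instead uses $1+\alpha_n\le e^{\alpha_n}$, iterates the recursion to obtain a uniform bound $u_n\le M$, and then argues that the increments $u_{n+1}-u_n$ are controlled by $M|\alpha_n|+|\beta_n|$, from which convergence of $u_n$ and summability of $v_n$ are read off. Your approach via $A_n=\prod_{k<n}(1+\alpha_k)$ and the auxiliary nonincreasing sequence $z_n=\tilde u_n+\sum_{k\ge n}|\tilde\beta_k|$ is cleaner: a single monotonicity argument yields both conclusions at once, and it sidesteps the slightly delicate step in the paper where a one-sided bound on $u_{n+1}-u_n$ is treated as if it were two-sided. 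What the paper's approach buys is an explicit uniform bound on $u_n$ obtained early, which can be handy in applications; what yours buys is transparency and a direct connection to the (super)martingale reasoning behind the stochastic Robbins--Siegmund lemma.
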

\begin{proof}
  It is well known that $1+x\leq \exp x$ for all $x\in\mathbb{R}$. Since $u_n$ and $v_n$ are non-negative, one has: $u_{n+1}\leq \exp (\alpha_n) u_n +\beta_n$. Which iteratively implies: $u_n \leq \exp(\sum_{i=0}^{n-1}\alpha_i)u_0 + \sum_{i=0}^{n-1}\exp(\sum_{j=i+1}^{n-1}\alpha_j)\beta_i$. Since $\sum_n |\alpha_n|<\infty$, for all integers $k$ and $l$ such that $k<l$, $\sum_{n=k}^l\alpha_n\leq \sum_{n=0}^\infty|\alpha_n|$ and $\exp(\sum_{n=k}^l\alpha_n)\leq \exp(\sum_{n=0}^\infty|\alpha_n|)<\infty$. Thus, $\sum_n|\beta_n|<\infty$, and it holds that $|u_n|\leq M= \exp(\sum_{n=0}^\infty|\alpha_n|)(|u_0|+\sum_{n=0}^\infty|\beta_n|)<\infty$. Hence,
  $|u_{n+1}-u_n|\leq M|\alpha_n|+|\beta_n|$ and $\sum_n|u_{n+1}-u_n|<\infty$. Thence, as stated in the lemma:
  \begin{enumerate}
  \item Sequence $u_n = u_0+\sum_{m=0}^{n-1}(u_{m+1}-u_m)$ is convergent.
  \item $\sum v_n\leq \sum_n|u_{n+1}-u_n|+ M\sum_n|\alpha_n| + \sum|\beta_n|<\infty$.
  \end{enumerate}

\end{proof}

\subsection{\modif{Derivation of the ADMM}}
\label{app:admm}

using the notation introduced in Section \ref{sec:admm},
the update equation of $x_n$ simplifies to:
\begin{eqnarray*}
x_{n+1}(v) &=& \arg\min_{x} f_v(x) + \sum_{w\sim v} -\eta_n(w,v)x +  \frac\rho2(z_n(w,v)-x)^2 \\
&=& \prox_{f_v,\,\rho d(v)}\left(\tilde z_n(v)+\frac{\tilde \eta_n(v)}\rho\right)
\end{eqnarray*}
where $d(v)$ is the degree of $v$, $\tilde z_n(v) = d(v)^{-1}\sum_{w\sim v} z_n(w,v)$
and $\tilde \eta_n(v) = d(v)^{-1}\sum_{w\sim v} \eta_n(w,v)$. The update equation of sequence $z_n$ reduces to:
\begin{eqnarray*}
  \left(\begin{array}[h]{@{}c@{}}
z_{n+1}(v,w) \\
z_{n+1}(w,v)
\end{array}\right)
&=& \arg\min_{(z_1,z_2)}  S_\rho(z_1,z_2;\eta_n(v,w),\eta_n(w,v),x_{n+1}(w),x_{n+1}(v))
\end{eqnarray*}
where $S_\rho(z_1,z_2;\eta_1,\eta_2,x_1,x_2) =\lambda\,|z_1-z_2| + \eta_1z_1+ \eta_2z_2+  \frac\rho2(z_1-x_1)^2+  \frac\rho2(z_2-x_2)^2$.
Minimization of $S_\rho$ w.r.t. $(z_1,z_2)$ yields
\begin{eqnarray}
\frac{z_1+z_2}2 &=& \frac{x_1+x_2}2-\frac{\eta_1+\eta_2}{2\rho} \label{eq:sumZ}\\
\frac{z_1-z_2}2 &=& \soft_{\lambda/\rho}\left(\frac{x_1-x_2}2+\frac{\eta_2-\eta_1}{2\rho}\right) \label{eq:difZ}
\end{eqnarray}
where $\soft_\omega(x) = \sign(x)\cdot\max(|x|-\omega,0)$ is the soft-thresholding function.
Using the update equation of $\eta_n$, we obtain
$$
\eta_{n+1}(v,w) + \eta_{n+1}(w,v) = \eta_{n}(v,w)+ \eta_{n}(w,v)+
\rho\left(z_{n+1}(v,w)+z_{n+1}(w,v)-x_{n+1}(v)-x_{n+1}(w)\right)
$$
which by~(\ref{eq:sumZ}) implies that $\eta_{n+1}(v,w) + \eta_{n+1}(w,v)=0$.
Therefore, equation (\ref{eq:sumZ}) implies that for each $n$,
$$
{z_{n}(v,w)+z_{n}(w,v)} ={x_{n}(v)+x_{n}(w)}
$$
We set $\delta_n(v,w) = \frac 12(z_{n}(v,w)-z_{n}(w,v))$.
Equation~(\ref{eq:difZ}) implies that
$$
\delta_{n+1}(v,w) =  \soft_{\lambda/\rho}\left(\frac{x_{n+1}(w)-x_{n+1}(v)}2-\frac{\eta_{n}(v,w)}{\rho}\right)
$$
Using again the update equation of $\eta_n$, we obtain
$$
\eta_{n+1}(v,w)  = \eta_{n}(v,w)+
\rho\left(\delta_{n+1}(v,w)-\frac 12(x_{n+1}(w)-x_{n+1}(v))\right)
$$
Set $\beta_{n}(v,w)=2\eta_{n}(v,w)/\rho$. After some algebra,
$$
\beta_{n+1}(v,w)  = \proj_{[-2\lambda/\rho,2\lambda/\rho]}\left(\beta_{n}(v,w)+x_{n+1}(v)-x_{n+1}(w)\right)\ .
$$
Finally, we simplify the update equation in $x_n$ as follows. Using
$z_n(w,v) = \frac 12(x_{n}(w)+x_{n}(v)) + \delta_n(w,v)$,
we obtain
$
\tilde z_n(v) =\frac 12(x_{n}(v)+\tilde x_{n}(v)) + \tilde\delta_n(v)
$
where $\tilde x_{n}(v) = d(v)^{-1}\sum_{w\sim v}x_n(w)$
and $\tilde\delta_n(v)  = d(v)^{-1}\sum_{w\sim v}\delta_n(w,v)$.
By summing equality $2\delta_{n}(w,v)=\beta_{n}(w,v)  - \beta_{n-1}(w,v) +x_{n}(v)-x_{n}(w)$
w.r.t $w\sim v$, one has $2\tilde\delta_n(v) = \tilde\beta_{n}(v)  - \tilde\beta_{n-1}(v) +x_{n}(v)-\tilde x_{n}(v)$
where $\tilde \beta_{n}(v) = d(v)^{-1}\sum_{w\sim v}\beta_{n}(w,v)$.
Thus,
$
\tilde z_n(v) =x_{n}(v)+\frac 12( \tilde\beta_{n}(v)  - \tilde\beta_{n-1}(v) )\,.
$
Finally,
$$
x_{n+1}(v) = \prox_{f_v,\,\rho d(v)}\left(x_{n}(v)+\textstyle\frac 32 \tilde\beta_{n}(v)  - \frac 12\tilde\beta_{n-1}(v) \right)\ .
$$
Setting $\mu_{n+1}(v,w) = \beta_{n}(v,w)$ yields exactly { Algorithm 2}.

We now assume that the graph $G$ is $d$-regular and show that the average over the network is preserved. We  consider here on the average consensus case.
  Computing the derivative $\frac{\partial \mathcal{L}}{\partial x}$ which exists since $\mathcal{L}$ is smooth in $x$; one gets the necessary condition:
  \[
  \forall v\in V, \mskip 20mu x_{n+1}(v) - x_0(v) - \sum_{v\sim w}\eta_n(w,v) + \rho\sum_{v\sim w}\left(x_{n+1}(v) -  z_n(w,v)\right) = 0
  \]
  Then, summing up, and using the fact that (i), when an edge $(v,w)$ belongs to $\overset{\rightleftharpoons}E$, the edge $(w,v)$ also belongs to $\overset{\rightleftharpoons}E$, and (ii) $\eta_{n+1}(v,w) = -\eta_{n+1}(w,v)$, along with (iii) $z_n(v,w) + z_n(w,v) = x_n(v)+x_n(w)$:
  \[
  \sum_v \left(1+2\rho d(v)\right)x_{n+1}(v)  = \sum_v x_0(v) + 2\rho\sum_v d(v) x_n(v)
  \]
  For a $d$-regular graph, we have, by induction: $\forall n\geq 0$, $\bar x_n=\bar x_0$.

\section{Proofs of Section~\ref{sec:stubborn}}


\subsection{Proof of Proposition~\ref{prop:deterg-cvg}}
\label{app:deter-cvg}

Equation $ x_{n+1} =  W x_n$ writes $ x_{n+1}^R =  W^R  x_n^R
+  W^S x^S$. Repeating the argument, we get:
\[
 x_n^R = ( W^R)^n x_0^R + \sum_{k=0}^{n-1}( W^R)^k W^S x^S
\]

Lemma~\ref{lem:invertibility} below shows that $\rho(W^R)<1$.  From $\rho(W^R) < 1$ we
deduce that $( W^R)^n x_0^R$ tends to $0$ and $\sum_{k=0}^{n-1}(
W^R)^k W^S x^S$ tends to $( I- W^R)^{-1} W^S x^S$.

\begin{lemma}
  \label{lem:invertibility}
  Under Assumption~\ref{assum:stubgoss_rand}, for each complex number $z$ with
  $|z|\geq 1$, matrix $z I -  W^R$ is invertible.
\end{lemma}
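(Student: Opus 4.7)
The plan is to reduce the claim to showing that the spectral radius $\rho(W^R)$ is strictly less than $1$. Indeed, the claim that $zI - W^R$ is invertible for every $|z|\geq 1$ is equivalent to saying that $W^R$ has no eigenvalue of modulus at least $1$. Since $W^R$ is a principal submatrix of the row-stochastic matrix $W$, its row sums are at most $1$, hence $\|W^R\|_\infty \leq 1$, which gives the easy half $\rho(W^R)\leq 1$. The real work is to rule out eigenvalues on the unit circle.

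To do this, I would argue by contradiction using a Perron--Frobenius-style extremality argument. Assume $W^R x = z x$ with $x\neq 0$ and $|z|=1$. Normalize so that $\|x\|_\infty = 1$, and consider the nonempty set $S_{\max}=\{v\in R : |x_v|=1\}$. For any $v\in S_{\max}$, the chain of inequalities
\[
1 = |zx_v| = \Bigl|\sum_{w\in R} W^R_{v,w}\, x_w\Bigr| \leq \sum_{w\in R} W^R_{v,w}\,|x_w| \leq \sum_{w\in R} W^R_{v,w} \leq 1
\]
must be an equality throughout. Equality in the rightmost step forces $\sum_{w\in R} W^R_{v,w}=1$, which, since the full row of $W$ sums to $1$, means that $W^S_{v,s}=0$ for every stubborn $s$, i.e., $v$ has no outgoing edge in $E'$ to any stubborn node. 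Equality in the second inequality forces $|x_w|=1$ for every regular neighbor $w$ with $W^R_{v,w}>0$, so $S_{\max}$ is closed under out-neighbors in $E'$ among regular nodes.

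These two properties produce the desired contradiction by invoking the connectivity hypothesis (Assumption~\ref{assum:stubgoss_rand}(c)). Pick any $v\in S_{\max}$ and follow a directed path in $E'$ from $v$ to some stubborn node $s$: by the closure property applied repeatedly, all intermediate regular vertices along the path belong to $S_{\max}$; but then the last regular vertex on the path has a direct $E'$-edge to the stubborn node $s$, which contradicts the fact that no element of $S_{\max}$ has a forward edge to a stubborn node. Hence no eigenvalue of $W^R$ lies on the unit circle, and $\rho(W^R)<1$.

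The main obstacle, and the step that requires the most care, is the extremality argument: one must correctly exploit simultaneous equality in the triangle inequality (which propagates the modulus-one property along $E'$-edges) and equality in the sub-stochastic bound (which rules out any leakage to stubborn nodes from $S_{\max}$). Once both consequences are in hand, the connectivity hypothesis closes the loop in a single walk and the rest is immediate.
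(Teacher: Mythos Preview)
Your argument is correct and is essentially the same Perron--Frobenius extremality argument that the paper uses: pick a vertex of maximal modulus, force equality throughout the chain of inequalities to deduce that (i) no row weight leaks to $S$ and (ii) all out-neighbors also have maximal modulus, then propagate along an $E'$-path to a stubborn node to reach a contradiction with Assumption~\ref{assum:stubgoss_rand}(c). The only cosmetic difference is that you first dispose of $|z|>1$ via $\|W^R\|_\infty\le 1$ and then treat $|z|=1$, whereas the paper runs the extremality step directly for $|z|\ge 1$; the substance is identical.
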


\begin{proof}
Assume that $z I -  W^R$ be not invertible, then there
exists a vector $ x\not=0$ such that $ W^R x = z x$. Let us denote
by $v_0\in R$ a node such that $| x(v)|$ be maximum. We have,
$z\bs x(v_0) = \sum_{v\in R} w^R(v_0,v) x(v)$.
Hence, $|z|| x(v_0)| \leq \sum_{v\in R} w^R(v_0,v)| x(v)|$.  By definition
of $v_0$: $|z|| x(v_0)| \leq (\sum_{v\in R} w^R(v_0,v))| x(v)|$. Since
$ W  1 =  1$, we have $\sum_{v\in R}w^R(v_0,v)\leq 1$. Thence,
necessarily,
$\sum_{v\in R}w^R(v_0,v) = 1$ which proves that all neighbors of
$v_0$ for $E'$ are in $R$, which in turn implies that $|z|| x(v_0)| =
\sum_{v\in R} w^R(v_0,v)| x(v)|$. Moreover, $w^R(v_0,v) > 0$ implies that
$ x(v) = z x(v_0)$, otherwise the equality $z x(v_0) = \sum_{v\in
  R}w^R(v_0,v) x(v)$ would be violated. So one can repeat the argument with
all the neighbors of $v_0$: all their own neigbors are necessarily in $R$ and
eventually all the connected component containing $v_0$ lies in $R$, which
completes the proof.
\end{proof}

\subsection{Proof of Theorem~\ref{the:robTVGA}}
\label{app:robTVGA}

We are going to use Theorem  \ref{the:min}   where $f_v(x)$ is given by $\frac{1}{2} (x - x_0(v))^2 + \lambda \sum_{\substack{w \in S}} |{ x -  x_0(w)}|$.

Let us first consider the case  $\overline { x}_0^{R} + \lambda |S|  < a$.
The subdifferential $\partial F(x^\star 1_V)$ for  $x^* =  \overline { x}_0^{R} + \lambda |S|$ is clearly given by the singleton $\{x^* 1_R  - x_0^R  - \lambda |S|
 1_R \}  = \{  \overline { x}_0^{R} 1_R - x_0^R  \}$.  Since $\lambda \geq     ||x_0^{R} - \overline {x}_0^{R} 1_R ||_*$, the second condition of   Theorem  \ref{the:min} is satisfied implying that $x^* 1_V$ is a minimizer of \eqref{eq:13}. The strict convexity of \eqref{eq:13} implies the uniqueness of the minimizer.

The case $\overline { x}_0^{R} - \lambda |S|  > a$ is exactly handled  in the same way.

Let us now assume that $|\overline { x}_0^{R} - a| \leq  \lambda |S|$ and take
$x^* = a$. Using the fact that the subdifferential of $|x|$ for $x = 0$ is given by the interval $[-1,1]$, we deduce that  $$\partial f_v(x^\star = a) =  \lambda \times  \{ \sum_{\substack{w \in S \\ |\zeta_{v,w}| \leq 1} } \zeta_{v,w}     \}  + a - x_0(v).$$
Let  $\zeta  =  \frac{\overline { x}_0^{R} - a}{\lambda |S|}$ and let us take $\zeta_{v,w} = \zeta$ for $v \in R$ and $w \in S$.
$\zeta_{v,w}$ clearly belongs to the interval $[-1,1]$. Moreover, using the fact that $
\partial F(x^\star 1_R) = \left\{ u\in \bR^R : \forall v\in R,\, u(v)\in \partial f_v(x^\star)\right\}$, we deduce that  $\partial F(x^\star 1_V)$ contains the vector
$x^* 1_R - x_0^R + \lambda \zeta |S| 1_R$.  Using the definition of $\zeta$, we get that $\overline { x}_0^{R} 1_R - x_0^R $ belongs to $\partial F(x^\star 1_V)$. Using again the assumption $\lambda \geq     ||x_0^{R} - \overline {x}_0^{R} 1_R ||_*$  and Theorem  \ref{the:min} we get that $x^* 1_V$ is a minimizer of \eqref{eq:13}. Uniqueness of the minimizer is still implied by the strict convexity of \eqref{eq:13}.

\end{document}